\numberwithin{equation}{section}
\newtheorem{theorem}{Theorem}[]
\newtheorem{proposition}[theorem]{Proposition}
\newtheorem{lemma}[theorem]{Lemma}
\newtheorem*{hypothesis*}{Hypothesis}
\theoremstyle{definition}
\newcommand{\Z}{\mathbf Z}
\newcommand{\GL}{\mathrm{GL}}
\newcommand{\Aut}{\operatorname{Aut}}
\newcommand{\Id}{\operatorname{Id}}
\begin{document}

\begin{center}
\Large{\bf Left braces of size $p^2q^2$}
		
\vspace{0.4cm}
\Large{Teresa Crespo}

\vspace{0.2cm}
\normalsize{Departament de Matem\`atiques i Inform\`atica, Universitat de Barcelona, Gran Via de les Corts Catalanes 585, 08007, Barcelona (Spain), e-mail:teresa.crespo@ub.edu}

\end{center}

\begin{abstract}
 We consider relatively prime integer numbers $m$ and $n$  such that each solvable group of order $mn$ has a normal subgroup of order $m$. We prove that each brace of size $mn$ is a semidirect product of a brace of size $m$ and a brace of size $n$. We further give a method to classify braces of size $mn$ from the classification of braces of sizes $m$ and $n$. We apply this result to determine all braces of size $p^2q^2$, for $p$ and $q$ odd primes satisfying some conditions which hold in particular for $p$ a Germain prime and $q=2p+1$.

 \noindent
 {\bf Keywords:} Left braces, Sylow subgroups, semidirect product, Germain primes.

 \noindent
 {\bf MSC2020:} 16T25, 20D20, 20D45.
\end{abstract}

\section{Introduction}

In \cite{R} Rump introduced braces to study
set-theoretic solutions of the Yang-Baxter equation. A (left) brace is a triple $(B,+,\cdot)$ where $B$ is a set and $+$ and $\cdot$ are binary operations such
that $(B, +)$ is an abelian group, $(B, \cdot)$ is a group and
$$a\cdot(b+c)+a = a\cdot b+a\cdot c,$$
for all $a, b, c \in B$. We call $(B, +$) the additive group and $(B, \cdot)$ the multiplicative group of the left brace. The cardinal of $B$ is called the size of the brace. If $(B,+)$ is an abelian group, then $(B,+,+)$ is a brace, called trivial brace.

Let $B_1$ and $B_2$ be left braces. A map $f : B_1 \to B_2$ is said to be a brace morphism if $f(b+b') = f(b)+f(b')$ and $f(b\cdot b') = f(b)\cdot f(b')$ for all $b,b' \in B_1$. If $f$ is bijective, we say that $f$ is an isomorphism. In that case we say that the braces $B_1$ and $B_2$ are isomorphic.

We recall the definition of direct and semidirect product of braces as defined in \cite{Ce} and \cite{SV}.
Let $(B_1,+,\cdot)$ and $(B_2,+,\cdot)$ be braces and $\tau:(B_2,\cdot)\to\Aut(B_1,+,\cdot)$
be a group morphism.
Define in $B_1\times B_2$ operations $+$ and $\cdot$ by

$$
(a,b)+(a',b')=(a+a',b+b'), \quad (a,b)\cdot(a',b')=(a\cdot \tau(b)(a'),b\cdot b').
$$

\noindent
Then $(B_1\times B_2,+,\cdot)$ is a brace which is called the semidirect product of the braces $B_1$ and $B_2$ via $\tau$ and will be denoted $B_1\rtimes_{\tau} B_2$.
If $\tau$ is the trivial morphism, then $(B_1\times B_2,+,\cdot)$ is called the direct product of $B_1$ and $B_2$.

We recall that, for a left brace $(B,+,\cdot)$ and each $a \in B$, we have a bijective map
$\lambda_a: B \rightarrow B$ defined by $\lambda_a(b)=-a+ a\cdot b$ which satisfies $\lambda_a(b+c)=\lambda_a(b)+\lambda_a(c), a\cdot b=a +\lambda_a(b),  \lambda_{a\cdot b}=\lambda_a \circ \lambda_b$, for any $a,b,c$ in $B$.

Left braces have been classified for sizes $p^2, p^3$, for $p$ a prime number (\cite{Ba2}); $pq$ and $p^2q$, for $p$ and $q$ odd prime numbers (\cite{AB, AB2, CCD, D}); $2p^2$, for $p$ an odd prime number (\cite{C}); $8p$, for $p$ an odd prime number $\neq 3, 7$ (\cite{CGRV1}) and for $12p$, for $p$ an odd prime number $\geq 7$ (\cite{CGRV2}). In this paper we consider relatively prime integer numbers $m$ and $n$  such that each solvable group of order $mn$ has a normal subgroup of order $m$. We prove that each brace of size $mn$ is a semidirect product of a brace of size $m$ and a brace of size $n$. We further give a method to classify braces of size $mn$ from the classification of braces of sizes $m$ and $n$. This is a generalization of the result obtained in \cite{CGRV2} in the case in which $m$ is prime. We apply our result to describe all braces of size $p^2q^2$, for $p$ and $q$ odd primes satisfying $q>p, q\geq 5, p \mid q-1, p\nmid q+1, p^2 \nmid q-1$. We note that these conditions hold in particular when $p$ is an odd Germain prime and $q=2p+1$.

\section{Left braces of size $mn$, for $\gcd(m,n)=1$}\label{method}

In this section we consider relatively prime integer numbers $m$ and $n$ and assume that each solvable group of order $mn$ has a normal subgroup of order $m$. We prove that each brace of order $mn$ is a semidirect product $B_1\rtimes_{\tau} B_2$, where $B_1$ is a brace of size $m$, $B_2$ is a brace of size $n$ and $\tau:(B_2,\cdot)\to\Aut(B_1,+,\cdot)$ is a group morphism. Moreover, given such $B_1$ and $B_2$,  we determine when two group morphisms $\sigma, \tau:(B_2,\cdot)\to\Aut(B_1,+,\cdot)$ provide isomorphic braces.

\begin{theorem}\label{str}
Let $m$ and $n$ be relatively prime integer numbers such that each solvable group of order $mn$ has a normal subgroup of order $m$. Then each brace of size $mn$ is a semidirect product of a brace of size $m$ and a brace of size $n$.
\end{theorem}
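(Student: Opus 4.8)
The plan is to use the associated solvable group of a brace and pull back a normal subgroup of order $m$. Recall that for a brace $(B,+,\cdot)$ of size $mn$, the multiplicative group $(B,\cdot)$ acts on the additive group $(B,+)$ via the maps $\lambda_a$, and one forms the semidirect product $G = (B,+) \rtimes_\lambda (B,\cdot)$, often called the group of the brace (or a group closely related to it). This group $G$ is solvable: both $(B,+)$ and $(B,\cdot)$ are solvable since a brace is built on an abelian additive group and the multiplicative group of a finite brace is known to be solvable (this is a standard fact about finite braces, following from Rump's results and the structure of the socle series). Alternatively, and more directly, $(B,\cdot)$ itself is a solvable group of order $mn$: a finite brace has nontrivial socle, and quotienting by the socle reduces the size, so by induction $(B,\cdot)$ is solvable.

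First I would establish that $(B,\cdot)$ is solvable and hence, by the hypothesis on $m$ and $n$, has a normal subgroup $N$ of order $m$. Next, the key step: I claim $N$ is automatically a sub-brace, in fact an ideal of $B$, or at least a left ideal stable enough to split off. Here is where I would be careful. Since $\gcd(m,n)=1$ and $|N|=m$, the subgroup $N$ is the unique subgroup of $(B,\cdot)$ of order $m$ (any other subgroup of order $m$ would, together with $N$, be normal and by coprimality arguments coincide with $N$ — more precisely $N$ is characteristic). One then needs that $N$ is also a subgroup of $(B,+)$. This should follow because $N$, being normal of order $m$ in the multiplicative group, is invariant under all the $\lambda_a$: indeed $\lambda_a$ is conjugation-related, and since $(B,\cdot)$ acts on itself, the image $\lambda_a(N)$ under the additive-group automorphism $\lambda_a$ — one checks $\lambda_a$ permutes the elements and, using that $N$ is the set of elements of $\cdot$-order dividing $m$ (by coprimality and normality, $N$ consists exactly of the $m$-elements), that $\lambda_a(N) = N$. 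Then $N$ with operations inherited from $B$ is a sub-brace of size $m$, and it is a left ideal.

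Then I would produce the complementary sub-brace $B_2$ of size $n$. By coprimality, $(B,+) = N \oplus A$ for a unique subgroup $A$ of order $n$ (the $n$-torsion of the abelian group $(B,+)$). I need $A$ to also be a subgroup of $(B,\cdot)$: since $N$ is an ideal, the quotient brace $B/N$ has size $n$ and additive group $A$ (identified via the splitting), and I must lift this to an actual sub-brace. This uses a Schur–Zassenhaus-type argument inside the brace: because $|N| = m$ is coprime to $n = |B/N|$, the short exact sequence of multiplicative groups $1 \to N \to (B,\cdot) \to (B/N,\cdot) \to 1$ splits, giving a complement $B_2$ in $(B,\cdot)$ of order $n$; one then checks $B_2$ is also closed under $+$ (it must be the $n$-torsion $A$ by coprimality, since any element of $B_2$ has $\cdot$-order dividing $n$, and I would relate $\cdot$-order and $+$-order using the $\lambda$ maps and coprimality to conclude $B_2 = A$). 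Hence $B_2$ is a sub-brace of size $n$. Finally, set $\tau: (B_2,\cdot) \to \Aut(B_1,+,\cdot)$, $\tau(b) = \lambda_b|_{B_1}$; since $N = B_1$ is $\lambda$-invariant and $\lambda_{b\cdot b'} = \lambda_b \circ \lambda_{b'}$, this is a group morphism, and the brace operations on $B = B_1 \oplus B_2$ recover exactly the semidirect product formulas from the Introduction.

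The main obstacle I anticipate is the step showing the multiplicative complement $B_2$ is simultaneously an additive subgroup (equivalently, equals the $n$-torsion of $(B,+)$). The splitting as groups is routine Schur–Zassenhaus, but transferring it to the brace structure requires knowing that "$\cdot$-order" and "$+$-order" are controlled by the same prime sets, which in turn rests on $N$ being an ideal so that $B/N$ makes sense as a brace of order $n$ with the expected additive group; chasing this compatibility carefully — and making sure $\tau$ lands in brace automorphisms, not merely additive-group automorphisms — is the delicate part. I would also double check that the hypothesis is invoked correctly: it is the solvable group $(B,\cdot)$ (or the full group $G$ of the brace, whichever is cleaner) to which we apply "has a normal subgroup of order $m$."
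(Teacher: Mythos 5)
Your overall architecture (a normal Hall subbrace of order $m$, a complement of order $n$, and $\tau$ given by the $\lambda$-action) matches the paper, but you build it in the wrong order and this creates a real gap. You first extract a normal subgroup $N$ of order $m$ from the solvable group $(B,\cdot)$ and then try to show $N$ is additively closed by arguing that $\lambda_a(N)=N$ ``because $N$ is the set of elements of $\cdot$-order dividing $m$.'' That step fails as stated: $\lambda_a$ is an automorphism of $(B,+)$, not of $(B,\cdot)$, so it preserves additive orders, and there is no a priori reason it should preserve the set of elements of multiplicative order dividing $m$. The paper avoids this entirely by reversing the logic: let $B_1$ and $B_2$ be the \emph{unique additive} subgroups of orders $m$ and $n$ (they exist and are characteristic in the abelian group $(B,+)$ by coprimality); since each $\lambda_a\in\Aut(B,+)$, they are $\lambda$-invariant, and then $a\cdot b=a+\lambda_a(b)$ shows they are closed under $\cdot$, hence are subbraces. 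Only at that point is the hypothesis invoked, to conclude that the order-$m$ subgroup $(B_1,\cdot)$ is normal in $(B,\cdot)$ (it must coincide with the normal Hall subgroup guaranteed by the hypothesis). This also makes your Schur--Zassenhaus detour and the ``lift $A$ to a multiplicative complement'' step unnecessary: $B_2$ is already a subbrace and is automatically a complement of $B_1$ in $(B,\cdot)$.

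The second gap is at the end: asserting that the operations ``recover exactly the semidirect product formulas'' skips the two identities that actually make $a+b\mapsto(a,b)$ a brace morphism onto $B_1\rtimes_\tau B_2$. One needs $\lambda_a(b)=b$ for $a\in B_1$, $b\in B_2$ (equivalently $a\cdot b=a+b$), and $\leftindex^{b}{a}=\lambda_b(a)$ (so that the conjugation action of $B_2$ on $B_1$ is the $\lambda$-action, and in particular $\tau(b)=\lambda_b|_{B_1}$ is a \emph{brace} automorphism, being simultaneously additive and a conjugation). Both are obtained in the paper by writing $a\cdot b=b\cdot a^{b}$ and $b\cdot a=\leftindex^{b}{a}\cdot b$, expanding each side as (element of $B_1$) $+$ (element of $B_2$), and comparing components in the direct sum $(B,+)=B_1\oplus B_2$; this uses normality of $(B_1,\cdot)$ and the $\lambda$-invariance of $B_1$ and $B_2$. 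Without these computations your $\tau$ is not known to land in $\Aut(B_1,+,\cdot)$ and the claimed isomorphism with the semidirect product is unverified.
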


\begin{proof}
Let $(B,+,\cdot)$ be a brace of size $mn$. Let $B_1$ and $B_2$ be its unique additive subgroups of size $m$ and $n$, respectively. In particular $B_1$ and $B_2$ are characteristic subgroups in $(B,+)$. Since, for each $a \in B$, $\lambda_a$ is an automorphism of $(B,+)$, it leaves $B_1$ and $B_2$ setwise invariant. This implies that, for $a,b \in B_1$, we have $ab=a+\lambda_a(b) \in B_1$, as $\lambda_a(b) \in B_1$. Similarly, this can be applied to $B_2$. So, $B_1$ and $B_2$ are subbraces of $B$ and $B_1$ and $B_2$ are complements of one another. Let $a \in B_1$ and $b \in B_2$, then

$$ba=\leftindex^b{a}b \Rightarrow b+\lambda_b(a)=\leftindex^b{a}+\lambda_{^b a}(b).$$

\noindent
Since the multiplicative group of a brace is always solvable (see \cite{Ce} Theorem 5.2), our hypothesis implies that $(B_1,\cdot)$ is a normal subgroup of $(B,\cdot)$, hence $\leftindex^b{a} \in B_1$. Using again that the $\lambda$-action leaves $B_2$ setwise invariant, we obtain $\lambda_{^b a}(b) \in B_2$. A comparison of the components shows $\leftindex^b{a}=\lambda_b(a)$, i.e. under the $\lambda$-action, $(B_2,\cdot)$ acts by automorphisms of $(B_1,+)$ and $(B_1,\cdot)$, that is, by brace automorphisms. Analogously

$$ab=ba^b \Rightarrow a+\lambda_a(b)=b+\lambda_b(a^b),$$

\noindent
where $\lambda_a(b) \in B_2, \lambda_b(a^b) \in B_1$. Comparing components, we obtain $\lambda_a(b)=b$. Therefore $ab=a+\lambda_a(b)=a+b$ for $a \in B_1, b \in B_2$. Also, $ba=\leftindex^b{a}+\lambda_{^b a}(b)=\leftindex^b{a}+b=\tau_b(a)+b$ for an action $\tau:B_2 \rightarrow \Aut(B_1)$.

Finally, for $a,a' \in B_1; b,b' \in B_2$, we have

$$\begin{array}{lll} (a+b)(a'+b') &=& ab(a'+b')=a(ba'-b+bb')=a(\tau_b(a')+bb')\\ &=& a\tau_b(a')-a+a(bb')=a\tau_b(a')+bb',
\end{array}$$

\noindent
where we have use the brace condition in the second and fourth equalities. Hence

$$B \to B_1\rtimes_{\tau} B_2 \, ; \, a+b \mapsto (a,b)$$

\noindent
is indeed a brace morphism.

\end{proof}

We want to see now when two semidirect products of braces $B_1$ and $B_2$ of coprime orders are isomorphic.

\begin{proposition}\label{prop} Let $B_1, B_2$ be braces with $\gcd(|B_1|,|B_2|)=1$. Consider semidirect products $B_{\sigma}:=B_1\rtimes_{\sigma} B_2, B_{\tau}:=B_1\rtimes_{\tau} B_2$, for morphisms $\sigma,\tau:(B_2,\cdot) \to \Aut(B_1,+,\cdot)$. An isomorphism $h:B_{\sigma} \to B_{\tau}$ is of the form $(h_1,h_2)$, where $h_i \in \Aut(B_i), i=1,2$, and $h_1$ and $h_2$ satisfy

$$\tau h_2=\leftindex^{h_1}{\sigma}.$$

\end{proposition}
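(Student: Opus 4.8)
The plan is to first understand the structure of an arbitrary brace isomorphism $h:B_\sigma\to B_\tau$ using the same coprimality argument that drove Theorem~\ref{str}. Both $B_\sigma$ and $B_\tau$ have underlying additive group $B_1\times B_2$, and since $\gcd(|B_1|,|B_2|)=1$, the subgroups $B_1\times\{0\}$ and $\{0\}\times B_2$ are the unique additive subgroups of their respective orders, hence characteristic in $(B_1\times B_2,+)$. Any brace isomorphism $h$ is in particular an automorphism of the additive group, so it must preserve these two characteristic subgroups; therefore $h$ restricts to additive-group isomorphisms $h_1$ on $B_1$ and $h_2$ on $B_2$, and $h(a,b)=(h_1(a),h_2(b))$. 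The first step is thus to record that $h=(h_1,h_2)$ with $h_i$ additive automorphisms.

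Next I would promote $h_1,h_2$ to \emph{brace} automorphisms. Because $B_1\times\{0\}$ and $\{0\}\times B_2$ are subbraces of both $B_\sigma$ and $B_\tau$ (this was essentially shown in the proof of Theorem~\ref{str}: the $\lambda$-maps preserve each factor), and $h$ is a brace morphism that carries each of these subbraces onto itself, the restrictions $h_1$ and $h_2$ are brace morphisms. Concretely, for $a,a'\in B_1$ one has $(a,0)\cdot(a',0)=(a\cdot a',0)$ in $B_\sigma$ and likewise in $B_\tau$ (since $\sigma(0)=\tau(0)=\mathrm{id}$), so $h_1(a\cdot a')=h_1(a)\cdot h_1(a')$; and similarly for $h_2$ on $B_2$. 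So $h_i\in\Aut(B_i)$ for $i=1,2$.

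The heart of the proof is then to extract the compatibility relation from the requirement that $h$ respect multiplication on \emph{mixed} elements. In $B_\sigma$ we have $(0,b)\cdot(a,0)=(\sigma(b)(a),b)$, while in $B_\tau$ we have $(h_1,h_2)$ applied to this should equal $(h_1(\sigma(b)(a)),h_2(b))$, which must coincide with $(0,h_2(b))\cdot(h_1(a),0)=(\tau(h_2(b))(h_1(a)),h_2(b))$ computed in $B_\tau$. Comparing first components gives
$$
h_1\circ\sigma(b)=\tau(h_2(b))\circ h_1\qquad\text{for all }b\in B_2,
$$
i.e. $\tau(h_2(b))={}^{h_1}\!\sigma(b)$ where ${}^{h_1}\!\sigma(b):=h_1\circ\sigma(b)\circ h_1^{-1}$ denotes conjugation in $\Aut(B_1,+,\cdot)$; this is exactly the asserted identity $\tau h_2={}^{h_1}\sigma$. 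I should also check the converse direction is not needed here (the statement only claims every isomorphism has this form), but I would remark that any pair $(h_1,h_2)$ satisfying the relation does yield an isomorphism, since the other defining equations of a brace morphism (additivity, and multiplicativity on the two pure factors) are automatic. The only mild obstacle is bookkeeping: making sure that the additive-subgroup-preservation argument is airtight (it rests squarely on $\gcd(|B_1|,|B_2|)=1$ and the structure theorem for finite abelian groups, giving uniqueness of the Hall-type additive subgroups) and that one does not need to separately verify multiplicativity on a general element $(a,b)=(a,0)\cdot(0,b)$ — but that follows once multiplicativity holds on both pure factors and on the mixed product $(0,b)\cdot(a,0)$, since every element factors and $h$ is already known additive.
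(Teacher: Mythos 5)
Your proposal is correct and follows essentially the same route as the paper: use coprimality to see that $B_1$ and $B_2$ are the unique additive subgroups of their orders, deduce $h=(h_1,h_2)$ with $h_i\in\Aut(B_i)$, then compare the two ways of computing $h$ on a product and read off $\tau h_2=\leftindex^{h_1}{\sigma}$. The only differences are cosmetic — you specialize the multiplicativity check to mixed elements $(0,b)\cdot(a,0)$ where the paper computes with general $(a,b)\cdot(a',b')$, and you spell out why the restrictions $h_1,h_2$ are brace (not merely additive) automorphisms, which the paper leaves implicit.
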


\begin{proof} The coprimality of $|B_1|$ and $|B_2|$ implies that the $B_i$ are subbraces of $B_{\sigma}$ and $B_{\tau}$ and furthermore, $(B_1,+)$ (respectively $(B_2,+)$) is the only subgroup of order $m$ (respectively $n$) in $(B_{\sigma},+)$ and $(B_{\tau},+)$. Hence an isomorphism $h:B_{\sigma} \to B_{\tau}$ is of the form $(h_1,h_2)$, where $h_i \in \Aut(B_i), i=1,2$. For $a,a' \in B_1, b,b' \in B_2$, we have

$$h((a,b)\cdot (a',b'))=h(a\sigma(b)(a'),bb')=(h_1(a\sigma(b)(a')),h_2(bb'))$$

\noindent
and

$$\begin{array}{lll} h(a,b)\cdot h(a',b')&=&(h_1(a),h_2(b))\cdot (h_1(a'),h_2(b'))\\ &=&(h_1(a)\tau(h_2(b))(h_1(a')),h_2(b)h_2(b')).
\end{array}$$

\noindent
We obtain

$$h_1(\sigma(b)(a')=\tau(h_2(b))(h_1(a')).$$

\noindent
Replacing $a'$ by $h_1^{-1}(a')$ results in the equation

$$h_1(\sigma(b)(h_1^{-1}(a'))=\tau(h_2(b))(a').$$

\noindent
As $a'$ and $b$ are arbitrary, this implies

$$\tau h_2=\leftindex^{h_1}{\sigma}.$$

\end{proof}

\section{Braces of size $p^2$, for $p$ an odd prime number}\label{Bach}

In \cite{Ba2} Bachiller obtained the classification of braces of sizes $p^2$ and $p^3$, up to isomorphism, for $p$ a prime number. We recall it for braces $(B,+,\cdot)$ of size $p^2$, for $p$ odd. We note that in this case $(B,\cdot)$ is isomorphic to $(B,+)$. For each brace, we give the group of brace automorphisms and an explicit isomorphism from $(B,\cdot)$ to $(B,+)$.

\subsection{$(B,+) \simeq \Z/(p^2)$}\label{cyclic}

There are two braces, up to isomorphism, with additive group isomorphic to $\Z/(p^2)$, the trivial one and a brace with $\cdot$ defined by

$$x_1\cdot x_2 =x_1+x_2+px_1x_2.$$

\noindent
In both cases, $(B,\cdot) \simeq \Z/(p^2)$. In the trivial case, we have $\Aut B= \Aut (\Z/(p^2))\simeq (\Z/(p^2))^*$. In the nontrivial case, we have

$$\Aut B= \{ k \in (\Z/(p^2))^* \, : \, k \equiv 1 \pmod{p} \}$$

\noindent
and an isomorphism from $(B,\cdot)$ into $\Z/(p^2)$ is given by $n\mapsto n-pn(n-1)/2$.

\subsection{$(B,+) \simeq \Z/(p)\times \Z/(p)$}\label{elem}

We write the elements in $\Z/(p)\times \Z/(p)$ in vector form. There are two braces, up to isomorphism, with additive group isomorphic to $\Z/(p)\times \Z/(p)$, the trivial one and a brace with $\cdot$ defined by

$$\left( \begin{matrix} x_1\\ y_1 \end{matrix} \right) \cdot \left(\begin{matrix} x_2 \\ y_2 \end{matrix} \right)=\left(\begin{matrix} x_1+x_2+y_1y_2\\ y_1+y_2\end{matrix} \right).$$

\noindent
In both cases, $(B,\cdot) \simeq \Z/(p)\times \Z/(p)$. In the trivial case, we have $\Aut B= \Aut (\Z/(p)\times \Z/(p))\simeq \GL(2,p)$. In the nontrivial case, we have

$$\Aut B= \left\{ \left( \begin{array}{cc} d^2 & b \\ 0 & d \end{array} \right) \, : \, b \in \Z/(p), d \in (\Z/(p))^*\right\}$$

\noindent
and an isomorphism from $(B,\cdot)$ into $\Z/(p) \times \Z/(p)$ is given by

$$\left(\begin{matrix} x\\ y \end{matrix} \right) \mapsto \left(\begin{matrix} x-y(y-1)/2\\ y \end{matrix} \right).$$

\section{Groups of order  $p^2q^2$\label{groups}}

We assume now that $p$ and $q$ are primes satisfying $p>2$, $q>p$ and $q\geq 5$.
These hypotheses imply that a group $G$ of order $p^2q^2$ has a unique normal $q$-Sylow $S_q$ of order $q^2$. Indeed, the number $n_q$ of $q$-Sylow subgroups of $G$ satisfies $n_q \in \{1,p,p^2\}$ and $n_q \equiv 1 \pmod{q}$. Clearly $q\nmid p-1$ and $q\mid p^2-1$ implies $q\mid p-1$ or $q\mid p+1$ but, if $q>p$, the second condition holds only for $p=2$ and $q=3$. We obtain that a group of order $p^2q^2$ is the semidirect product of a normal subgroup $S_q$ of order $q^2$ and a subgroup $S_p$ of order $p^2$. It is then determined by a group $G_1$ of order $q^2$, a group $G_2$ of order $p^2$ and a morphism $\tau:G_2 \rightarrow \Aut(G_1)$. We note that triples $(G_1,G_2,\tau)$ and $(G_1',G_2',\tau')$ provide isomorphic groups of order $p^2q^2$ if and only if there exist isomorphisms $f:G_1 \rightarrow G_1', g:G_2 \rightarrow G_2'$ such that $\leftindex^f{\tau}=\tau'  g$. The groups of order $p^2q^2$ may then be described by determining the equivalence classes of morphisms $\tau:G_2 \rightarrow \Aut(G_1)$ under the relation

$$\tau \sim \tau' \Leftrightarrow \exists (f,g) \in \Aut G_1 \times \Aut G_2 \, : \, \leftindex^f{\tau}=\tau'  g.$$

Let us further assume that $p$ and $q$ satisfy $p \mid q-1, p\nmid q+1$ and $p^2 \nmid q-1$.
If $G_1 \simeq \Z/(q^2)$ then $\Aut G_1 \simeq (\Z/(q^2))^*\simeq \Z/q(q-1)$. The assumptions $p \mid q-1$ and $p^2 \nmid q-1$ imply that $\Aut G_1$ contains a unique subgroup of order $p$ but no subgroup of order $p^2$.
If $G_1 \simeq \Z/(q) \times \Z/(q)$, then $\Aut G_1 \simeq \GL(2,q)$ and $|\GL(2,q)|=(q+1)q(q-1)^2$. The assumptions $p \mid q-1, p\nmid q+1$ and $p^2 \nmid q-1$ imply that $\Aut G_1$ contains elements of order $p$ but no element of order $p^2$.

Since $\tau$ and $\leftindex^{f}{\tau}$, for $f \in \GL(2,q)$, give isomorphic groups of order $p^2q^2$, we need to determine the subgroups of order $p$ of $\GL(2,q)$, up to conjugation. This is done in the following lemma which is easy to prove.

\begin{lemma}
For $\lambda$ a fixed generator of the unique subgroup of order $p$ of $\Z/(q)^*$,  a system of representatives of the conjugation classes of subgroups of order $p$ of $\GL(2,q)$ is

\begin{equation}\label{mat}
\left\langle \left( \begin{array}{cc}1&0\\0&\lambda \end{array}\right) \right\rangle,\left\langle \left( \begin{array}{cc}\lambda&0\\0&\lambda \end{array}\right) \right\rangle, \left\langle \left( \begin{array}{cc}\lambda&0\\0&\lambda^{-1} \end{array}\right) \right\rangle,\left\langle \left( \begin{array}{cc}\lambda&0\\0&\lambda^k \end{array}\right) \right\rangle,
\end{equation}

\noindent
for $k$ running over a system of representatives of elements of $(\Z/(p))^*$, different from $1$ and $-1$, under the relation $k \sim \l$ if and only if $k\l \equiv 1 \pmod{p}$.

The number of subgroups of order $p$ of $\GL(2,q)$ up to conjugation is then $(p+3)/2$.

\end{lemma}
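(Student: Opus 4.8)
The plan is to classify subgroups of order $p$ in $\GL(2,q)$ by first understanding the elements of order $p$ and then passing to the subgroups they generate. Since $p \mid q-1$ and $p^2 \nmid q-1$, an element $g$ of order $p$ has eigenvalues that are $p$-th roots of unity in $\F_q^*$ (because $p \nmid q$ and $p \nmid q+1$, so the minimal polynomial of $g$ splits over $\F_q$ rather than over $\F_{q^2}$). Hence every element of order $p$ is diagonalizable over $\F_q$, and every subgroup of order $p$ is conjugate in $\GL(2,q)$ to one contained in the diagonal torus $T \cong \F_q^* \times \F_q^*$. First I would record this reduction, using that two diagonal matrices are $\GL(2,q)$-conjugate if and only if they differ by the permutation of the two diagonal entries (the Weyl group action).

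Next I would parametrize: writing $\lambda$ for a fixed generator of the unique order-$p$ subgroup $C \le \F_q^*$, every order-$p$ cyclic subgroup of $T$ is generated by $\mathrm{diag}(\lambda^a, \lambda^b)$ with $(a,b) \in (\Z/p)^2 \setminus \{(0,0)\}$; and the subgroup generated depends only on the ratio, so up to replacing the generator we may assume the subgroup is $\langle \mathrm{diag}(1,\mu)\rangle$ with $\mu \in C$ (if one entry is a nontrivial power of $\lambda$) or $\langle \mathrm{diag}(\lambda,\lambda)\rangle$ (the scalar case). The remaining identifications are: (i) swapping the two coordinates, which sends $\langle \mathrm{diag}(\lambda, \lambda^k)\rangle$ to $\langle \mathrm{diag}(\lambda^k,\lambda)\rangle = \langle \mathrm{diag}(\lambda, \lambda^{k'})\rangle$ where $k k' \equiv 1 \pmod p$, coming from the Weyl element; and (ii) rescaling the generator, i.e. raising $\mathrm{diag}(\lambda,\lambda^k)$ to a power prime to $p$, which does not change $k$ up to the same inversion relation. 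So the distinct classes are: $\langle \mathrm{diag}(1,\lambda)\rangle$ (one eigenvalue trivial), $\langle \mathrm{diag}(\lambda,\lambda)\rangle$ (scalar), $\langle \mathrm{diag}(\lambda,\lambda^{-1})\rangle$ ($k = -1$, fixed by inversion), and one class $\langle \mathrm{diag}(\lambda,\lambda^k)\rangle$ for each unordered pair $\{k, k^{-1}\}$ with $k \in (\Z/p)^* \setminus \{1, -1\}$. This matches \eqref{mat}.

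Finally I would count. The elements of $(\Z/p)^*$ different from $1$ and $-1$ number $p-3$, and since $p$ is odd no such $k$ satisfies $k \equiv k^{-1}$ (that would force $k^2 \equiv 1$, i.e. $k \equiv \pm 1$), so they pair up into $(p-3)/2$ classes under $k \sim k^{-1}$. Adding the three sporadic classes gives $(p-3)/2 + 3 = (p+3)/2$.

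The main obstacle is the diagonalizability step: one must be careful to rule out elements of order $p$ whose eigenvalues lie in $\F_{q^2} \setminus \F_q$ (the "nonsplit torus" case) and unipotent contributions. This is exactly where the hypotheses $p \nmid q+1$ (eigenvalues cannot be primitive in $\F_{q^2}^*$ of order dividing $q+1$) and $p \neq q$ (no unipotents of order $p$, since a nontrivial unipotent in $\GL(2,q)$ has order $q$) are used; once these are in hand the rest is the bookkeeping of the torus-plus-Weyl-group action sketched above.
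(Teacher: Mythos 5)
Your argument is correct, and it is worth noting that the paper itself offers no proof of this lemma (it is only asserted to be ``easy to prove''), so your write-up supplies the missing justification rather than paralleling an existing one. The route you take is the natural one: since $p\mid q-1$ and $p\neq q$, the polynomial $x^p-1$ splits into $p$ distinct linear factors over $\F_q$, so every element of order $p$ in $\GL(2,q)$ is diagonalizable over $\F_q$ and every subgroup of order $p$ is conjugate into the split torus; the classes are then the orbits of $\mathbb{P}^1(\F_p)$ under the swap $[a:b]\mapsto [b:a]$, namely the two fixed points $[1:1]$, $[1:-1]$, the orbit $\{[0:1],[1:0]\}$, and $(p-3)/2$ orbits $\{[1:k],[1:k^{-1}]\}$ with $k\neq 0,\pm1$, giving $3+(p-3)/2=(p+3)/2$. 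Two small points of precision. First, the diagonalizability over $\F_q$ follows already from $p\mid q-1$ together with $p\neq q$; the hypothesis $p\nmid q+1$ is automatic for odd $p$ with $p\mid q-1$ (otherwise $p\mid 2$), so it is not doing independent work in excluding the nonsplit torus. Second, when you pass from conjugacy of elements to conjugacy of subgroups, you should state explicitly why no identifications beyond the Weyl swap and generator rescaling occur: if $g$ conjugates one non-central subgroup of the diagonal torus onto another, then $g$ carries the eigenspaces of a non-scalar member (the two coordinate axes) to coordinate axes, hence $g$ is monomial and acts on the torus through the Weyl group; equivalently, two diagonal generators give conjugate subgroups if and only if their exponent pairs agree up to a common unit scaling and a swap. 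With that observation recorded, your list is both exhaustive and irredundant, and the count is right.
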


We may now describe the groups of order $p^2q^2$ for primes $p$ and $q$ satisfying the following conditions.

\begin{equation}\label{hyp}
 q>p, p>2, q\geq 5, p \mid q-1, p\nmid q+1, p^2 \nmid q-1.
\end{equation}

\begin{lemma} Let $p$ and $q$ satisfying \eqref{hyp}. Let $G$ be a group of order $p^2q^2$ and let us denote by $S_q$ the unique $q$-Sylow subgroup of $G$.

\begin{enumerate}[1)]
\item Assume $S_q\simeq \Z/(q^2)$ and let $\alpha$ denote a fixed generator of the unique subgroup of order $p$ of $(\Z/(q^2))^*$. In this case, $G$ is isomorphic to one of the following groups.
\begin{enumerate}[{1.}1)]
\item $\Z/(p^2q^2)$;
\item $\Z/(q^2) \rtimes \Z/(p^2)$ with product given by

$$(x_1,y_1)\cdot(x_2,y_2)=(x_1+\alpha^{y_1}x_2,y_1+y_2);$$
\item $\Z/(pq^2)\times \Z/(p)$;
\item $\Z/(q^2) \rtimes (\Z/(p)\times \Z(p))$ with product given by
$$\left(x_1,\left(\begin{smallmatrix}y_1\\z_1\end{smallmatrix}\right)\right)\cdot
\left(x_2,\left(\begin{smallmatrix}y_2\\z_2\end{smallmatrix}\right)\right)=
\left(x_1+\alpha^{y_1}x_2,\left(\begin{smallmatrix}y_1+y_2\\z_1+z_2\end{smallmatrix}\right)\right).$$
\end{enumerate}
\item Assume $S_q\simeq \Z/(q)\times \Z/(q)$ and let $\lambda$ denote a fixed generator of the unique subgroup of order $p$ of $(\Z/(q))^*$. In this case, $G$ is isomorphic to one of the following groups.
\begin{enumerate}[{2.}1)]
\item $\Z/(p^2q)\times \Z/(q)$;
\item one of the $(p+3)/2$ groups $(\Z/(q)\times \Z/(q)) \rtimes_{M} \Z/(p^2)$ with product given by

$$\left(\left( \begin{array}{c} x_1 \\y_1 \end{array} \right),z_1\right) \cdot \left(\left( \begin{array}{c} x_2 \\y_2 \end{array} \right),z_2\right)=\left(\left( \begin{array}{c} x_1 \\y_1 \end{array} \right)+M^{z_1} \left( \begin{array}{c} x_2 \\y_2 \end{array} \right),z_1+z_2\right),$$

\noindent
where $M$ denotes one of the matrices in $\eqref{mat}$.
\item $\Z/(pq)\times \Z/(pq)$;
\item one of the $(p+3)/2$ groups $(\Z/(q)\times \Z/(q))\rtimes_{M}(\Z/(p)\times \Z/(p))$, with product given by

$$\left(\left( \begin{smallmatrix} x_1 \\y_1 \end{smallmatrix} \right),\left( \begin{smallmatrix} z_1 \\t_1 \end{smallmatrix} \right)\right) \cdot \left(\left( \begin{smallmatrix} x_2 \\y_2 \end{smallmatrix} \right),\left( \begin{smallmatrix} z_2 \\t_2 \end{smallmatrix} \right)\right)=\left(\left( \begin{smallmatrix} x_1 \\y_1 \end{smallmatrix} \right)+M^{z_1} \left( \begin{smallmatrix} x_2 \\y_2 \end{smallmatrix} \right),\left( \begin{smallmatrix} z_1 +z_2\\t_1+t_2 \end{smallmatrix} \right)\right),$$

\noindent
where $M$ denotes one of the matrices in $\eqref{mat}$;
\item $(\Z/(q)\times \Z/(q))\rtimes_{\lambda} (\Z/(p)\times \Z/(p))$ with product given by

$$\left(\left( \begin{smallmatrix} x_1 \\y_1 \end{smallmatrix} \right),\left( \begin{smallmatrix}  z_1 \\t_1 \end{smallmatrix} \right)\right) \cdot \left(\left( \begin{smallmatrix}x_2 \\y_2 \end{smallmatrix} \right),\left( \begin{smallmatrix} z_2 \\t_2 \end{smallmatrix} \right)\right)=\left(\left( \begin{smallmatrix} x_1 +\lambda^{t_1}x_2 \\y_1+\lambda^{z_1+t_1}y_2 \end{smallmatrix} \right),\left( \begin{smallmatrix} z_1 +z_2\\t_1 +t_2\end{smallmatrix} \right)\right).$$

\end{enumerate}

\end{enumerate}

\end{lemma}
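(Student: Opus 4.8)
The plan is to apply the description of groups of order $p^2q^2$ recalled at the start of Section~\ref{groups}: such a group is a semidirect product $S_q\rtimes_{\tau}S_p$, with $S_q$ the unique (normal) $q$-Sylow of order $q^2$ and $S_p$ a subgroup of order $p^2$, and its isomorphism class is determined by the triple $(S_q,S_p,\tau)$ up to the equivalence $\tau\sim\tau'$ iff $\leftindex^{f}{\tau}=\tau'g$ for some $(f,g)\in\Aut S_q\times\Aut S_p$. Since $S_q\in\{\Z/(q^2),\ \Z/(q)\times\Z/(q)\}$ and $S_p\in\{\Z/(p^2),\ \Z/(p)\times\Z/(p)\}$, it suffices to list, in each of the four cases, the equivalence classes of morphisms $\tau:S_p\to\Aut S_q$ and then to read off the induced product on $S_q\times S_p$.

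The key structural input is that, under \eqref{hyp}, $\Aut S_q$ has no element of order $p^2$: for $S_q\simeq\Z/(q^2)$ because $\Aut S_q\simeq\Z/q(q-1)$ is cyclic while $p\mid q-1$, $p^2\nmid q-1$; for $S_q\simeq\Z/(q)\times\Z/(q)$ because $|\GL(2,q)|=q(q-1)^2(q+1)$ has $p$-part $p^2$, already realised inside the elementary abelian diagonal $p$-torus $D=\langle\mathrm{diag}(\lambda,1),\mathrm{diag}(1,\lambda)\rangle$, which is therefore a $p$-Sylow of $\GL(2,q)$ (unique up to conjugacy by Sylow) and has exponent $p$. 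Hence $|\tau(S_p)|\in\{1,p\}$ when $S_p$ is cyclic, and $|\tau(S_p)|\in\{1,p,p^2\}$ when $S_p\simeq\Z/(p)\times\Z/(p)$; moreover for $S_q\simeq\Z/(q^2)$ the abelianity of $\Aut S_q$ additionally forces $|\tau(S_p)|\in\{1,p\}$ and trivialises the conjugation part of the equivalence, so that there $\tau\sim\tau'$ iff $\tau=\tau'g$ for some $g\in\Aut S_p$.

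For $S_q\simeq\Z/(q^2)$ the trivial morphism yields $\Z/(p^2q^2)$ and $\Z/(q^2)\times(\Z/(p)\times\Z/(p))\simeq\Z/(pq^2)\times\Z/(p)$, items 1.1) and 1.3); a nontrivial $\tau$ has image the unique subgroup $\langle\alpha\rangle$ of order $p$ of $(\Z/(q^2))^{*}$, and precomposition by $\Aut S_p$ normalises it to the standard one — for $S_p=\Z/(p^2)$ because $\Aut(\Z/(p^2))$ reaches every generator of $\langle\alpha\rangle$ via $(\Z/(p^2))^{*}\twoheadrightarrow(\Z/(p))^{*}$ (item 1.2)), for $S_p=\Z/(p)\times\Z/(p)$ because $\tau$ amounts to a nonzero functional onto $\langle\alpha\rangle\simeq\Z/(p)$ and $\GL(2,p)$ is transitive on these (item 1.4)). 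For $S_q\simeq\Z/(q)\times\Z/(q)$ I invoke the preceding lemma: there are $(p+3)/2$ conjugacy classes of subgroups of order $p$ of $\GL(2,q)$, with representatives the $\langle M\rangle$ of \eqref{mat}. The trivial $\tau$ gives items 2.1) and 2.3). If $|\tau(S_p)|=p$, two such morphisms are equivalent exactly when their images are conjugate: necessity is clear, and sufficiency follows by conjugating to a common image $\langle M\rangle$ and then using that $\Aut S_p$ acts transitively on the generators of $\langle M\rangle$ (for $S_p=\Z/(p^2)$) resp. on the surjections $\Z/(p)\times\Z/(p)\to\langle M\rangle$ (for $S_p=\Z/(p)\times\Z/(p)$, again by transitivity of $\GL(2,p)$ on nonzero functionals); this produces the $(p+3)/2$ groups of 2.2) resp. 2.4). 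If $|\tau(S_p)|=p^2$ — possible only for $S_p=\Z/(p)\times\Z/(p)$ — then $\tau$ is an isomorphism onto a $p$-Sylow of $\GL(2,q)$, hence (up to conjugacy) onto $D$, and any two isomorphisms $\Z/(p)\times\Z/(p)\to D$ differ by an element of $\Aut(\Z/(p)\times\Z/(p))$, so there is a single such class, item 2.5); the displayed product corresponds to the isomorphism $(z,t)\mapsto\mathrm{diag}(\lambda^{t},\lambda^{z+t})$ onto $D$. In each case the product on $S_q\times S_p$ transcribes to the stated formula, and the correspondence with equivalence classes of triples shows the listed groups are pairwise non-isomorphic (only the fact that every $G$ occurs in the list is needed in what follows).

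The routine parts are the verification that $\Aut S_q$ has no element of order $p^2$ and the transcription of the semidirect-product formulas. The step I expect to require the most care is the bookkeeping of the equivalence relation when $S_q\simeq\Z/(q)\times\Z/(q)$ — concretely, showing that within each of the $(p+3)/2$ conjugacy classes of order-$p$ subgroups of $\GL(2,q)$ there is exactly one equivalence class of $\tau$, which needs both that conjugacy of images is necessary for equivalence and that $\Aut S_p$ is transitive on the relevant generators, respectively functionals.
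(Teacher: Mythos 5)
Your proposal is correct and follows exactly the route the paper intends: the paper states this lemma without a separate proof, relying on the preamble of Section~\ref{groups} (unique normal $q$-Sylow, classification via equivalence classes of morphisms $\tau:S_p\to\Aut S_q$ under $\leftindex^{f}{\tau}=\tau'g$) together with the preceding lemma on the $(p+3)/2$ conjugacy classes of order-$p$ subgroups of $\GL(2,q)$, and your write-up supplies precisely that bookkeeping, including the correct observation that $\Aut S_q$ has no element of order $p^2$ and that the diagonal torus is a Sylow $p$-subgroup of $\GL(2,q)$. (Only a cosmetic quibble: for $S_q\simeq\Z/(q^2)$ it is the cyclicity of $(\Z/(q^2))^{*}$, not merely its abelianity, that bounds the image of $\tau$ by $p$.)
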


\numberwithin{equation}{subsection}

\section{Left braces of size $p^2q^2$}

 In this section we consider primes $p$ and $q$ satisfying the conditions in \eqref{hyp}.
At the beginning of Section \ref{groups}, we have seen that, under these assumptions, $m=q^2$ and $n=p^2$ satisfy the conditions in Theorem \ref{str}. Hence, every brace of size $p^2q^2$ is the semidirect product of a brace $B_1$ of size $q^2$ and a brace $B_2$ of size $p^2$. We use the description of braces of order $p^2$ recalled in Section \ref{Bach} and Proposition \ref{prop} to determine all braces of size $p^2q^2$, for $p$ and $q$ satisfying the conditions \eqref{hyp}. We note that, in particular, these conditions are satisfied when $p$ is an odd Germain prime and $q=2p+1$.

For the description of the multiplicative groups of the braces of size $p^2q^2$ given below we shall use the explicit isomorphism from $(B_2,\cdot)$ to $(B_2,+)$ given in Sections \ref{cyclic} and \ref{elem}, respectively. Using these isomorphisms, one may prove that the description of the action of $\Aut B_2$ on $(B_2,\cdot)$ looks the same as its action on $(B_2,+)$ (see \cite{D} Lemma 7).

\subsection{$(B_1,+)=\Z/(q^2)$ and $(B_2,+)=\Z/(p^2)$}

In this section we describe braces of size $p^2q^2$ whose additive law is given by

\begin{equation}\label{addcc}
(x_1,x_2)+ (y_1,y_2)=(x_1+ y_1,x_2+y_2),
\end{equation}

\noindent
for $x_1,y_1 \in B_1; x_2,y_2 \in B_2$.

\subsubsection{$B_1$ trivial brace}

In this case, $\Aut B_1= (\Z/(q^2))^*$. Since $\Aut B_1$ is abelian, $\leftindex^{h_1}{\tau}=\tau$, for every morphism $\tau$ from $(B_2,\cdot)$ to $\Aut B_1$.

The morphisms from $\Z/(p^2)$ to $\Aut B_1$ are $\tau_i$ defined by $1 \mapsto \alpha^i$, for $\alpha$ a fixed generator of the unique subgroup of order $p$ of $\Aut B_1$, $0\leq i \leq p-1$, where $i=0$ corresponds to the trivial morphism.

\noindent
{\bf If $B_2$ is trivial,} for $h_2 \in \Aut B_2$ defined by $h_2(1)=i$, with $p \nmid i$, we have $\tau_i=\tau_1 h_2$. We obtain then two braces, the first one is the direct product of $B_1$ and $B_2$, with multiplicative law given by

\begin{equation}\label{mulcc1}
(x_1,x_2)\cdot (y_1,y_2)=(x_1+ y_1,x_2+y_2),
\end{equation}

\noindent
and the second one has multiplicative law given by

\begin{equation}\label{mulcc2}
(x_1,x_2)\cdot (y_1,y_2)=(x_1+\alpha^{x_2} y_1,x_2+y_2),
\end{equation}

\noindent
for $x_1,y_1 \in B_1; x_2,y_2 \in B_2;  \alpha$ a fixed element of order $p$ of $(\Z/(q^2))^*$.

\noindent
{\bf If $B_2$ is nontrivial,}  $\Aut B_2= \{ k \in (\Z/(p^2))^* \, : \, k \equiv 1 \pmod{p} \}$ and, for the morphisms $\tau_i$ defined above we have  $\tau_i h_2=\tau_i$, for each $h_2 \in \Aut B_2$. We obtain $p$ braces, including the direct product one. Taking into account the isomorphism from $(B_2,\cdot)$ into $\Z/(p^2)$ given in Section \ref{cyclic} and that $\alpha$ has order $p$, their multiplicative laws are given by

\begin{equation}\label{mulcc3}
(x_1,x_2)\cdot (y_1,y_2)=(x_1+\alpha^{i x_2} y_1,x_2+y_2+px_2y_2),
\end{equation}

\noindent
for $x_1,y_1 \in B_1; x_2,y_2 \in B_2; i=0,\dots,p-1; \alpha$ a fixed element of order $p$ of $(\Z/(q^2))^*$.

\subsubsection{$B_1$ nontrivial brace}

In this case, $\Aut B_1=\{ k \in (\Z/(q^2))^*:k\equiv 1 \pmod{q} \} \simeq \Z/(q)$. Then the unique morphism $\tau$ from $(B_2,\cdot)\simeq \Z/(p^2)$ to $\Aut B_1$ is the trivial one. We obtain two braces which are direct products of $B_1$ and $B_2$, where $B_2$ is either trivial or nontrivial. Their multiplicative laws are given by

\begin{equation}\label{mulcc4}
(x_1,x_2)\cdot (y_1,y_2)=(x_1+ y_1+qx_1y_1,x_2+y_2),
\end{equation}

\begin{equation}\label{mulcc5}
(x_1,x_2)\cdot (y_1,y_2)=(x_1+ y_1+qx_1y_1,x_2+y_2+px_2y_2),
\end{equation}

\noindent
for $x_1,y_1 \in B_1; x_2,y_2 \in B_2$.

\vspace{0.3cm}
Summing up, we have obtained the following result.

\begin{theorem} Let $p$ and $q$ be primes satisfying $q>p, q\geq 5, p \mid q-1, p\nmid q+1$ and $p^2 \nmid q-1$. There are $p+4$ braces with additive group $\Z/(p^2q^2)$. Four of them have multiplicative group $\Z/(p^2q^2)$ and the remaining $p$ have multiplicative group $\Z/(q^2) \rtimes \Z/(p^2)$.
\end{theorem}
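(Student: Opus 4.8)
The plan is to enumerate braces of size $p^2q^2$ with additive group $\Z/(p^2q^2)$ by combining Theorem~\ref{str}, which guarantees such a brace is a semidirect product $B_1\rtimes_\tau B_2$ with $|B_1|=q^2$ and $|B_2|=p^2$, together with the requirement that the additive group be cyclic. Since $(B,+)=(B_1,+)\times(B_2,+)$ and $p\ne q$, the additive group is $\Z/(p^2q^2)$ precisely when $(B_1,+)\simeq\Z/(q^2)$ and $(B_2,+)\simeq\Z/(p^2)$; thus only the braces constructed in the subsection ``$(B_1,+)=\Z/(q^2)$ and $(B_2,+)=\Z/(p^2)$'' contribute, and the task reduces to counting the non-isomorphic semidirect products there.

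First I would recall the four cases of that subsection according to whether $B_1$ is trivial/nontrivial and $B_2$ is trivial/nontrivial, and invoke Proposition~\ref{prop}: two semidirect products $B_1\rtimes_\sigma B_2$ and $B_1\rtimes_\tau B_2$ are isomorphic iff there exist $h_i\in\Aut(B_i)$ with $\tau h_2=\leftindex^{h_1}{\sigma}$. When $B_1$ is the trivial brace, $\Aut B_1=(\Z/(q^2))^*$ is abelian, so the twist by $h_1$ is vacuous and the classifying data is the orbit of $\tau$ under precomposition with $\Aut B_2$. For $B_2$ trivial, $\Aut B_2=(\Z/(p^2))^*$ acts transitively on the $p-1$ nontrivial morphisms $\tau_i\colon 1\mapsto\alpha^i$ (since these factor through $\Z/(p)$ and any nonzero residue mod $p$ is hit), giving exactly two braces: the direct product \eqref{mulcc1} and one nontrivial brace \eqref{mulcc2}. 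For $B_2$ nontrivial, $\Aut B_2=\{k\equiv1\pmod p\}$ acts trivially on morphisms factoring through $\Z/(p)$, so all $p$ morphisms $\tau_i$, $0\le i\le p-1$, give pairwise non-isomorphic braces \eqref{mulcc3}, including the direct product. When $B_1$ is the nontrivial brace of Section~\ref{cyclic}, $\Aut B_1\simeq\Z/(q)$ has order coprime to $p$, so the only morphism from $(B_2,\cdot)\simeq\Z/(p^2)$ is trivial, yielding the two direct-product braces \eqref{mulcc4} and \eqref{mulcc5}.

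Next I would tally: from $B_1$ trivial we get $2+p$ braces (two when $B_2$ trivial, $p$ when $B_2$ nontrivial), and from $B_1$ nontrivial we get $2$ more, for a total of $p+4$ braces with additive group $\Z/(p^2q^2)$. For the multiplicative group statement, I would identify $(B,\cdot)$ in each case. The direct products with both factors abelian multiplicatively (\eqref{mulcc1}) and the ``$B_1$ nontrivial'' cases \eqref{mulcc4}, \eqref{mulcc5}, together with the $i=0$ case of \eqref{mulcc3}, have multiplicative group a direct product of cyclic $p$- and $q$-groups, hence $\simeq\Z/(p^2q^2)$ — this accounts for the four braces with cyclic multiplicative group (noting \eqref{mulcc2} is the $i=0$-vs-$i\ne0$ distinction: it is the $B_2$-trivial nontrivial-action brace, whose multiplicative group is $\Z/(q^2)\rtimes\Z/(p^2)$). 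The remaining $p$ braces — namely \eqref{mulcc2} and the $p-1$ braces \eqref{mulcc3} with $i\ne0$ — have multiplicative group the nonabelian semidirect product $\Z/(q^2)\rtimes\Z/(p^2)$, since in each the action of the $p$-part on the cyclic $q^2$-part is nontrivial, and by the group classification in Section~\ref{groups} (case 1.2) there is a unique such group.

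The main obstacle I anticipate is the bookkeeping at the boundary between the two theorems' conventions: one must check carefully that the ``direct product'' brace appears exactly once in the count (it is the $i=0$ member of family \eqref{mulcc3} and coincides with \eqref{mulcc1} and with \eqref{mulcc4}/\eqref{mulcc5}'s trivial-action instances only up to isomorphism, not as listed equations), so that the total $p+4$ is not off by one; and that the four cyclic-multiplicative-group braces are correctly matched. This is genuinely just careful enumeration — each isomorphism question has already been reduced by Proposition~\ref{prop} to an orbit computation on a handful of morphisms into abelian or small automorphism groups — so no hard new argument is needed, only consistency.
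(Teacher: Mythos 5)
Your proposal is correct and follows essentially the same route as the paper: Theorem \ref{str} together with Proposition \ref{prop} reduce the problem to the orbit computations of Section 5.1, and your tally $2+p+1+1=p+4$ and your identification of which four braces have abelian multiplicative group agree exactly with the paper's. One small correction to your final paragraph: there is no over-counting issue with the direct products, because the four direct-product braces $B_1\times B_2$ (over the four choices of trivial/nontrivial $B_1$ and $B_2$) are pairwise \emph{non}-isomorphic --- $B_1$ and $B_2$ are the unique additive subgroups of orders $q^2$ and $p^2$, hence subbraces canonically attached to $B$ up to isomorphism --- so they do not ``coincide up to isomorphism'' as your parenthetical suggests, and each is legitimately counted once.
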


\subsection{$(B_1,+)=\Z/(q^2)$ and $(B_2,+)=\Z/(p)\times \Z/(p)$}

In this section we describe braces of size $p^2q^2$ whose additive law is given by

\begin{equation}\label{addcn}
\left( x_1,\left( \begin{smallmatrix} y_1\\ z_1 \end{smallmatrix} \right)\right)+\left( x_2,\left( \begin{smallmatrix} y_2\\ z_2 \end{smallmatrix} \right)\right)=\left( x_1+x_2,\left( \begin{smallmatrix} y_1+y_2\\ z_1+z_2 \end{smallmatrix} \right)\right),
\end{equation}

\noindent
for $x_1, x_2 \in B_1; \left( \begin{smallmatrix} y_1\\ z_1 \end{smallmatrix} \right), \left( \begin{smallmatrix} y_2\\ z_2 \end{smallmatrix} \right)\in B_2$.

\subsubsection{$B_1$ trivial brace}\label{three}

In this case, $\Aut B_1\simeq (\Z/(q^2))^*$. Since $\Aut B_1$ is abelian, $\leftindex^{h_1} \tau=\tau$, for every morphism $\tau$ from $G_2$ to $\Aut B_1$ and $h_1 \in \Aut B_1$.

\noindent
{\bf If $B_2$ is trivial,} every nontrivial morphism $\tau:\Z/(p)\times \Z/(p) \rightarrow (\Z/(q^2))^*$ is equal to $\tau_0 h_2$, for $h_2 \in \Aut B_2\simeq \GL(2,p)$ and $\tau_0$ defined by $\tau_0\left( \begin{smallmatrix} 1\\ 0 \end{smallmatrix} \right)=\alpha$,$\tau_0\left( \begin{smallmatrix} 0\\ 1 \end{smallmatrix} \right)=1$, for $\alpha$ a fixed element of order $p$ in $(\Z/(q^2))^*$. We obtain one brace whose multiplicative law is given by

\begin{equation}\label{mulcn1}
\left( x_1,\left( \begin{smallmatrix} y_1\\ z_1 \end{smallmatrix} \right)\right)\cdot\left( x_2,\left( \begin{smallmatrix} y_2\\ z_2 \end{smallmatrix} \right)\right)=\left( x_1+\alpha^{y_1}x_2,\left( \begin{smallmatrix} y_1+y_2\\ z_1+z_2 \end{smallmatrix} \right)\right),
\end{equation}

\noindent
where $\alpha$ is  an element of order $p$ in $\Aut B_1$. Besides, we have the direct product of $B_1$ and $B_2$ with multiplicative law given by

\begin{equation}\label{mulcn2}
\left( x_1,\left( \begin{smallmatrix} y_1\\ z_1 \end{smallmatrix} \right)\right)\cdot \left( x_2,\left( \begin{smallmatrix} y_2\\ z_2 \end{smallmatrix} \right)\right)=\left( x_1+x_2,\left( \begin{smallmatrix} y_1+y_2\\ z_1+z_2 \end{smallmatrix} \right)\right),
\end{equation}

\noindent
{\bf If $B_2$ is nontrivial,} $\Aut B_2=\left\{\left( \begin{smallmatrix} d^2 & b\\ 0&d \end{smallmatrix} \right)\, : \, d \in (\Z/(p))^*, b \in \Z/(p)\right\}$. Every nontrivial morphism $\tau$ from $\Z/(p)\times \Z/(p)$ to $\Aut B_1$ is equal to $\tau_0 g$, for $g \in \GL(2,p)$ and $\tau_0$ defined by $\tau_0\left( \begin{smallmatrix} 1\\ 0 \end{smallmatrix} \right)=\alpha, \tau_0\left( \begin{smallmatrix} 0\\ 1 \end{smallmatrix} \right)=1$, for $\alpha$ a fixed element of order $p$ in $\Aut B_1$. By computation, we obtain that, for $g_1, g_2 \in \GL(2,p)$, we have $\tau_0 g_1=\tau_0 g_2$ if and only if the first rows of $g_1$ and $g_2$ are equal. We obtain then that the set of nontrivial morphisms $\tau$ from $\Z/(p)\times \Z/(p)$ to $\Aut B_1$ is precisely $\{ \tau_0 \left( \begin{smallmatrix} a & b\\ 0&1\end{smallmatrix} \right)\, : \, a \in (\Z/(p))^*, b \in \Z/(p) \}\cup \{ \tau_0 \left( \begin{smallmatrix} 0 & b\\ 1&0\end{smallmatrix} \right)\, : \, b \in (\Z/(p))^*\}$. Now, for $\tau:=\tau_0 \left( \begin{smallmatrix} a & b\\ 0&1\end{smallmatrix} \right), \tau':=\tau_0 \left( \begin{smallmatrix} a' & b'\\ 0&1\end{smallmatrix} \right)$, there exists $h_2 \in \Aut B_2$ such that $\tau' h_2=\tau$ if and only if $a'/a$ is a square; for $\tau:=\tau_0 \left( \begin{smallmatrix} 0 & b\\ 1&0\end{smallmatrix} \right), \tau':=\tau_0 \left( \begin{smallmatrix} 0 & b'\\ 1&0\end{smallmatrix} \right)$, there always exists $h_2 \in \Aut B_2$ such that $\tau' h_2=\tau$; for $\tau:=\tau_0 \left( \begin{smallmatrix} a & b\\ 0&1\end{smallmatrix} \right),  \tau':=\tau_0 \left( \begin{smallmatrix} 0 & b'\\ 1&0\end{smallmatrix} \right)$, there exists no $h_2 \in \Aut B_2$ such that $\tau' h_2=\tau$. We obtain then three braces. By considering the isomorphism from $(B_2,\cdot)$ into $\Z/(p)\times \Z/(p)$ given in Section \ref{elem}, their multiplicative laws are given by

\begin{equation}\label{mulcn3}
\left( x_1,\left( \begin{smallmatrix} y_1\\ z_1 \end{smallmatrix} \right)\right)\cdot\left( x_2,\left( \begin{smallmatrix} y_2\\ z_2 \end{smallmatrix} \right)\right)=\left( x_1+\alpha^{y_1-z_1(z_1-1)/2}x_2,\left( \begin{smallmatrix} y_1+y_2+z_1z_2\\ z_1+z_2 \end{smallmatrix} \right)\right),
\end{equation}

\begin{equation}\label{mulcn4}
\left( x_1,\left( \begin{smallmatrix} y_1\\ z_1 \end{smallmatrix} \right)\right)\cdot\left( x_2,\left( \begin{smallmatrix} y_2\\ z_2 \end{smallmatrix} \right)\right)=\left( x_1+\alpha^{a(y_1-z_1(z_1-1)/2)}x_2,\left( \begin{smallmatrix} y_1+y_2+z_1z_2\\ z_1+z_2 \end{smallmatrix} \right)\right),
\end{equation}

\noindent
and

\begin{equation}\label{mulcn5}
\left( x_1,\left( \begin{smallmatrix} y_1\\ z_1 \end{smallmatrix} \right)\right)\cdot\left( x_2,\left( \begin{smallmatrix} y_2\\ z_2 \end{smallmatrix} \right)\right)=\left( x_1+\alpha^{z_1}x_2,\left( \begin{smallmatrix} y_1+y_2+z_1z_2\\ z_1+z_2 \end{smallmatrix} \right)\right),
\end{equation}

\noindent
respectively, where $\alpha$ is  a fixed element of order $p$ in $\Aut B_1$ and $a$ is a fixed quadratic nonresidue modulo $p$. Besides, we have the direct product of $B_1$ and $B_2$ with multiplicative law given by

\begin{equation}\label{mulcn6}
\left( x_1,\left( \begin{smallmatrix} y_1\\ z_1 \end{smallmatrix} \right)\right)\cdot\left( x_2,\left( \begin{smallmatrix} y_2\\ z_2 \end{smallmatrix} \right)\right)=\left( x_1+x_2,\left( \begin{smallmatrix} y_1+y_2+z_1z_2\\ z_1+z_2 \end{smallmatrix} \right)\right),
\end{equation}

\subsubsection{$B_1$ nontrivial brace}

In this case, $\Aut B_1=\{ k \in (\Z/(q^2))^*:k\equiv 1 \pmod{q} \} \simeq \Z/(q)$. Then the unique morphism $\tau$ from $G_2\simeq \Z/(p)\times \Z/(p)$ to $\Aut B_1$ is the trivial one. We obtain then just two braces which are the direct product of $B_1$ and $B_2$, corresponding to $B_2$ trivial and $B_2$ nontrivial. Their multiplicative laws are given by

\begin{equation}\label{mulcn7}
\left( x_1,\left( \begin{smallmatrix} y_1\\ z_1 \end{smallmatrix} \right)\right)\cdot\left( x_2,\left( \begin{smallmatrix} y_2\\ z_2 \end{smallmatrix} \right)\right)=\left( x_1+x_2+qx_1x_2,\left( \begin{smallmatrix} y_1+y_2\\ z_1+z_2 \end{smallmatrix} \right)\right),
\end{equation}

\begin{equation}\label{mulcn8}
\left( x_1,\left( \begin{smallmatrix} y_1\\ z_1 \end{smallmatrix} \right)\right)\cdot\left( x_2,\left( \begin{smallmatrix} y_2\\ z_2 \end{smallmatrix} \right)\right)=\left( x_1+x_2+qx_1x_2,\left( \begin{smallmatrix} y_1+y_2+z_1z_2\\ z_1+z_2 \end{smallmatrix} \right)\right),
\end{equation}

\vspace{0.3cm}
Summing up, we have obtained the following result.

\begin{theorem} Let $p$ and $q$ be primes satisfying $q>p, q\geq 5, p \mid q-1, p\nmid q+1$ and $p^2 \nmid q-1$. There are eight braces with additive group $\Z/(pq^2)\times \Z/(p)$. Four of them have multiplicative group $\Z/(pq^2)\times \Z/(p)$ and the remaining four have multiplicative group $\Z/(q^2)\rtimes (\Z/(p)\times \Z/(p))$.
\end{theorem}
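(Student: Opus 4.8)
The plan is to enumerate, up to brace isomorphism, all semidirect products $B_1\rtimes_\tau B_2$ where $B_1$ ranges over the two braces of size $q^2$ with $(B_1,+)\simeq\Z/(q^2)$ (trivial and nontrivial, as recalled in Section~\ref{cyclic}) and $B_2$ ranges over the two braces of size $p^2$ with $(B_2,+)\simeq\Z/(p)\times\Z/(p)$ (trivial and nontrivial, as recalled in Section~\ref{elem}). By Theorem~\ref{str}, every brace of the stated additive type arises this way, and by Proposition~\ref{prop}, two such products $B_1\rtimes_\sigma B_2$ and $B_1\rtimes_\tau B_2$ are isomorphic precisely when there exist $h_1\in\Aut B_1$, $h_2\in\Aut B_2$ with $\tau h_2=\leftindex^{h_1}{\sigma}$. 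So the whole statement reduces to counting equivalence classes of group morphisms $\tau:(B_2,\cdot)\to\Aut(B_1,+,\cdot)$ under this relation, for each of the four choices of the pair $(B_1,B_2)$. The additive group of $B_1\rtimes_\tau B_2$ is always $\Z/(q^2)\times(\Z/(p)\times\Z/(p))\simeq\Z/(pq^2)\times\Z/(p)$, which is why all eight braces share that additive group.

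First I would dispose of the two cases with $B_1$ nontrivial: here $\Aut B_1=\{k\in(\Z/(q^2))^*:k\equiv1\pmod q\}\simeq\Z/(q)$, a group of order $q$, and since $\gcd(q,p^2)=1$ the only morphism from $(B_2,\cdot)$ (a group of order $p^2$) to $\Aut B_1$ is trivial. This gives exactly the two direct products~\eqref{mulcn7} and~\eqref{mulcn8}, one for each of the two braces $B_2$. For the two cases with $B_1$ trivial, $\Aut B_1\simeq(\Z/(q^2))^*$ is abelian, so $\leftindex^{h_1}{\sigma}=\sigma$ and the equivalence relation collapses to $\tau h_2=\sigma$, i.e. orbits of morphisms under precomposition by $\Aut B_2$. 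Since $p\mid q-1$, $p\nmid q+1$, $p^2\nmid q-1$, the group $(\Z/(q^2))^*$ contains a unique subgroup of order $p$ and no element of order $p^2$, so every morphism from $(B_2,\cdot)$ factors through this subgroup $\langle\alpha\rangle\simeq\Z/(p)$ and is determined by its restriction to $(B_2,\cdot)^{ab}\simeq\Z/(p)\times\Z/(p)$; hence morphisms correspond to $\Hom(\Z/(p)^2,\Z/(p))\cong\Z/(p)^2$, on which $\Aut B_2$ acts.

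When $B_2$ is the trivial brace, $\Aut B_2\simeq\GL(2,p)$ acts on $\Hom(\Z/(p)^2,\Z/(p))\setminus\{0\}$ transitively, so there are exactly two orbits: the zero morphism, giving the direct product~\eqref{mulcn2}, and the nonzero ones, all equivalent, giving~\eqref{mulcn1}; that is $2$ braces. When $B_2$ is the nontrivial brace, $\Aut B_2=\{\left(\begin{smallmatrix}d^2&b\\0&d\end{smallmatrix}\right):d\in(\Z/(p))^*,b\in\Z/(p)\}$; as computed in Section~\ref{three}, a nonzero morphism $\tau_0 g$ depends only on the first row of $g$, and the action of $\Aut B_2$ on the $p^2-1$ nonzero first rows $(a,b)$ has three orbits, distinguished by whether $a=0$, or $a\neq0$ with $a$ a square, or $a\neq0$ with $a$ a nonsquare; together with the zero morphism this yields $4$ braces, namely~\eqref{mulcn3}--\eqref{mulcn6}. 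Adding the four ($2+2$) from the $B_1$-trivial cases to the two from the $B_1$-nontrivial cases gives $4+2=2+2+2+\cdots$; more precisely $2$ (from $B_1$ nontrivial) $+2$ (from $B_1$ trivial, $B_2$ trivial) $+4$ (from $B_1$ trivial, $B_2$ nontrivial) $=8$, of which the $4$ with trivial $\tau$ (direct products) have multiplicative group $\Z/(q^2)\times(\Z/(p)\times\Z/(p))\simeq\Z/(pq^2)\times\Z/(p)$ (using that $(B_1,\cdot)\simeq\Z/(q^2)$ and $(B_2,\cdot)\simeq\Z/(p)^2$), while the $4$ with nontrivial $\tau$ have multiplicative group $\Z/(q^2)\rtimes(\Z/(p)\times\Z/(p))$, the action being by the order-$p$ automorphism. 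The main obstacle is the careful orbit count for the nontrivial-$B_2$ case: one must verify precisely that $\tau_0 g$ depends only on the first row of $g$, determine the $\Aut B_2$-orbits on first rows (the quadratic-residue dichotomy), and check that no identification merges the ``$a=0$'' orbit with an ``$a\neq0$'' orbit — exactly the three bullet computations summarized after~\eqref{mulcn2}. Everything else is bookkeeping, plus translating each representative morphism into the explicit multiplicative law via the isomorphism $(B_2,\cdot)\to(B_2,+)$ of Section~\ref{elem}.
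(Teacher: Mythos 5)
Your proposal is correct and follows essentially the same route as the paper: reduce via Theorem~\ref{str} and Proposition~\ref{prop} to counting $\Aut B_2$-orbits of morphisms $(B_2,\cdot)\to\Aut(B_1,+,\cdot)$ for each of the four pairs $(B_1,B_2)$, getting $2+2+4+0$ nontrivial-plus-trivial classes in the same way (transitivity of $\GL(2,p)$ when $B_2$ is trivial, the first-row/quadratic-residue orbit analysis when $B_2$ is nontrivial, and the order argument killing all morphisms when $B_1$ is nontrivial). The orbit count for the nontrivial $B_2$ matches the paper's computation in Section~\ref{three} exactly, so there is nothing further to flag.
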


\subsection{$(B_1,+)=\Z/(q)\times \Z/(q)$ and $(B_2,+)=\Z/(p^2)$}

In this section we describe braces of size $p^2q^2$ whose additive law is given by

\begin{equation}\label{addnc}
\left( \left( \begin{smallmatrix} x_1\\ y_1 \end{smallmatrix} \right),z_1\right)+\left( \left( \begin{smallmatrix} x_2\\ y_2 \end{smallmatrix} \right),z_2\right)=\left(\left( \begin{smallmatrix} x_1+x_2\\ y_1+y_2 \end{smallmatrix} \right),z_1+z_2\right),
\end{equation}

\noindent
for $\left( \begin{smallmatrix} x_1\\ y_1 \end{smallmatrix} \right), \left( \begin{smallmatrix} x_2\\ y_2 \end{smallmatrix} \right)\in B_1, z_1,z_2 \in B_2$.

\subsubsection{$B_1$ trivial brace}

In this case, $\Aut B_1= \GL(2,q)$. Every morphism from $\Z/(p^2)$ to $\Aut B_1=\GL(2,q)$ is equal to $\leftindex^{h_1}{\tau}$ for some $h_1 \in \Aut B_1$ and $\tau$ defined by $\tau(1)=M^{\ell}$ for $M$ one of the matrices in \eqref{mat} and $1\leq \ell \leq p-1$.

\noindent
{\bf If $B_2$ is trivial,} $\Aut B_2=\Aut \Z/(p^2)$. For $\tau:\Z/(p^2) \rightarrow \Aut B_1$ defined by $\tau(1)=M$ and $h_2 \in \Aut \Z/(p^2)$, we have $\tau h_2(1)=M^{h_2(1)}$. Hence for morphisms $\tau, \tau'$ with $\tau(1)=M$ and $\tau'(1)=M^{\ell}$, one has $\tau \sim \tau'$. We have then one brace for each conjugation class of subgroups of order $p$ in $\GL(2,q)$. We obtain $(p+3)/2$ braces, whose multiplicative laws are given by

\begin{equation}\label{mulnc1}
\left( \left( \begin{smallmatrix} x_1\\ y_1 \end{smallmatrix} \right),z_1\right)\cdot\left( \left( \begin{smallmatrix} x_2\\ y_2 \end{smallmatrix} \right),z_2\right)=\left(\left( \begin{smallmatrix} x_1\\ y_1 \end{smallmatrix} \right)+M^{z_1}\left( \begin{smallmatrix} x_2\\ y_2 \end{smallmatrix} \right),z_1+z_2\right),
\end{equation}

\noindent
for $M$ one of the matrices in \eqref{mat}. Besides, we obtain the direct product of $B_1$ and $B_2$ whose multiplicative law is given by

\begin{equation}\label{mulnc2}
\left( \left( \begin{smallmatrix} x_1\\ y_1 \end{smallmatrix} \right),z_1\right)\cdot\left( \left( \begin{smallmatrix} x_2\\ y_2 \end{smallmatrix} \right),z_2\right)=\left(\left( \begin{smallmatrix} x_1+x_2\\ y_1+y_2 \end{smallmatrix} \right),z_1+z_2\right),
\end{equation}

\noindent
{\bf If $B_2$ is nontrivial,} we have $\Aut B_2=\{ k \in (\Z/(p^2))^* \,:\, k\equiv 1 \pmod{p} \}$. Since a nontrivial morphism $\tau$ from $(B_2,\cdot)$ to $\Aut B_1$ sends 1 to an element of order $p$, we have $\tau h_2=\tau$ for $h_2 \in \Aut B_2$. As noted above, a nontrivial morphism $\tau$ from $\Z/(p^2)$ to $\Aut B_1$ is equal to $\leftindex^{h_1}{\tau}$ for some $h_1 \in \Aut B_1$ and $\tau$ defined by $\tau(1)=M^{\ell}$ for $M$ one of the matrices in \eqref{mat} and $1\leq \ell \leq p-1$. Let us see if for some $\ell\in \{2,\dots,p-1\}$ and some matrix $M$ in \eqref{mat}, the matrices $M$ and $M^{\ell}$ are conjugate by some element in $\GL(2,q)$. This is so only for $M=\left(\begin{smallmatrix} \lambda &0\\0&\lambda^{-1} \end{smallmatrix} \right)$ and $\ell=p-1$. In this case, there are $p-1$ braces for each matrix $M$ different from $\left(\begin{smallmatrix} \lambda &0\\0&\lambda^{-1} \end{smallmatrix} \right)$ and $(p-1)/2$ for this last one. By considering the isomorphism from $(B_2,\cdot)$ into $\Z/(p^2)$ given in Section \ref{cyclic} and taking into account that $M$ denotes a matrix of order $p$, we obtain $\dfrac {p+1}{2} (p-1)+\dfrac{p-1} 2=\dfrac{(p-1)(p+2)} 2$ braces whose multiplicative laws are given by

\begin{equation}\label{mulnc3}
\left( \left( \begin{smallmatrix} x_1\\ y_1 \end{smallmatrix} \right),z_1\right)\cdot\left( \left( \begin{smallmatrix} x_2\\ y_2 \end{smallmatrix} \right),z_2\right)=\left(\left( \begin{smallmatrix} x_1\\ y_1 \end{smallmatrix} \right)+M^{\ell z_1}\left( \begin{smallmatrix} x_2\\ y_2 \end{smallmatrix} \right),z_1+z_2+pz_1z_2\right),
\end{equation}

\noindent
for $M$ one of the matrices in \eqref{mat} and with $1\leq \ell \leq p-1$, for $M\neq \left(\begin{smallmatrix} \lambda &0\\0&\lambda^{-1} \end{smallmatrix} \right)$; $1\leq \ell \leq (p-1)/2$, for $M=\left(\begin{smallmatrix} \lambda &0\\0&\lambda^{-1} \end{smallmatrix} \right)$.

Besides, we obtain the direct product of $B_1$ and $B_2$ whose multiplicative law is given by

\begin{equation}\label{mulnc4}
\left( \left( \begin{smallmatrix} x_1\\ y_1 \end{smallmatrix} \right),z_1\right)\cdot\left( \left( \begin{smallmatrix} x_2\\ y_2 \end{smallmatrix} \right),z_2\right)=\left(\left( \begin{smallmatrix} x_1+x_2\\ y_1+y_2 \end{smallmatrix} \right),z_1+z_2+pz_1z_2\right),
\end{equation}

\subsubsection{$B_1$ nontrivial brace}

If $B_1$ is nontrivial, $\Aut B_1=\{ \left(\begin{smallmatrix} d^2 & b \\ 0 & d \end{smallmatrix}\right) : b \in \Z/(q),d\in (\Z/(q^2))^* \}$. The matrices of order $p$ in $\Aut B_1$ are conjugate to some diagonal matrix of the form  $\left(\begin{smallmatrix} d^2 & 0 \\ 0 & d \end{smallmatrix}\right)$ with $d$ an element of order $p$ in $(\Z/(q))^*$. For $\lambda$ a chosen element of order $p$ in $(\Z/(q))^*$, the morphisms $\tau$ from $\Z/(p^2)$ to $\Aut B_1$ are given by $\tau(1)=\left(\begin{smallmatrix} \lambda^2 & 0 \\ 0 & \lambda \end{smallmatrix}\right)^{\ell}$, for $1\leq \ell \leq p-1$. We note that $\left(\begin{smallmatrix} \lambda^2 & 0 \\ 0 & \lambda \end{smallmatrix}\right)=\left(\begin{smallmatrix} \lambda & 0 \\ 0 & \lambda^k \end{smallmatrix}\right)^2$, with $k=(p+1)/2$.

\noindent
{\bf If $B_2$ is trivial,} for $\tau:\Z/(p^2)\rightarrow \Aut B_1$ defined by $\tau(1)=M$, we have $\tau h_2 (1)=M^{h_2(1)}$. Hence for morphisms $\tau, \tau'$ with $\tau(1)=M$ and $\tau'(1)=M^{\ell}$, one has $\tau \sim \tau'$. We may then reduce to the case where $\tau(1)=\left(\begin{smallmatrix} \lambda^2 & 0 \\ 0 & \lambda \end{smallmatrix}\right)$ and we obtain one brace whose multiplicative law is given by

\begin{equation}\label{mulnc5}
\left( \left( \begin{smallmatrix} x_1\\ y_1 \end{smallmatrix} \right),z_1\right)\cdot\left( \left( \begin{smallmatrix} x_2\\ y_2 \end{smallmatrix} \right),z_2\right)=\left(\left( \begin{smallmatrix} x_1+\lambda^{2z_1} x_2+\lambda^{2z_1} x_1x_2\\ y_1 +\lambda^{z_1} y_2\end{smallmatrix} \right),z_1+z_2\right).
\end{equation}

\noindent
Besides, we have the direct product whose multiplicative law is given by

\begin{equation}\label{mulnc6}
\left( \left( \begin{smallmatrix} x_1\\ y_1 \end{smallmatrix} \right),z_1\right)\cdot\left( \left( \begin{smallmatrix} x_2\\ y_2 \end{smallmatrix} \right),z_2\right)=\left(\left( \begin{smallmatrix} x_1+ x_2+x_1x_2\\ y_1 +y_2\end{smallmatrix} \right),z_1+z_2\right).
\end{equation}

\noindent
{\bf If $B_2$ is nontrivial,} we have $\Aut B_2=\{ k \in (\Z/(p^2))^* \,:\, k\equiv 1 \pmod{p} \}$, as above. For $h_2 \in \Aut B_2$ and $\tau:(B_2,\cdot) \rightarrow \Aut B_1$, we have $\tau h_2=\tau$. We obtain then $p-1$ braces.  By considering again the isomorphism from $(B_2,\cdot)$ into $\Z/(p^2)$ given in Section \ref{cyclic} and taking into account that $\tau(1)$ is a matrix of order $p$, their multiplicative laws are given by

\begin{equation}\label{mulnc7}
\left( \left( \begin{smallmatrix} x_1\\ y_1 \end{smallmatrix} \right),z_1\right)\cdot\left( \left( \begin{smallmatrix} x_2\\ y_2 \end{smallmatrix} \right),z_2\right)=\left(\left( \begin{smallmatrix} x_1+\lambda^{2\ell z_1} x_2+\lambda^{2\ell z_1}x_1x_2\\ y_1 +\lambda^{\ell z_1} y_2\end{smallmatrix} \right),z_1+z_2+pz_1z_2\right),
\end{equation}

\noindent
where $\lambda$ is a fixed element of order $p$ in $(\Z/(q))^*$ and $1\leq \ell \leq p-1$. Besides, we have the direct product whose multiplicative law is given by

\begin{equation}\label{mulnc8}
\left( \left( \begin{smallmatrix} x_1\\ y_1 \end{smallmatrix} \right),z_1\right)\cdot\left( \left( \begin{smallmatrix} x_2\\ y_2 \end{smallmatrix} \right),z_2\right)=\left(\left( \begin{smallmatrix} x_1+ x_2+x_1x_2\\ y_1 +y_2\end{smallmatrix} \right),z_1+z_2+pz_1z_2\right).
\end{equation}

\vspace{0.3cm}
Summing up, we have obtained the following result.

\begin{theorem} Let $p$ and $q$ be primes satisfying $q>p, q\geq 5, p \mid q-1, p\nmid q+1$ and $p^2 \nmid q-1$. There are $(p^2+4p+9)/2$ braces with additive group $\Z/(p^2q)\times \Z/(q)$.

\begin{enumerate}[a)]
\item There are four such braces with multiplicative group $\Z/(p^2q)\times \Z/(q)$;
\item for each of the matrices $M$ in \eqref{mat} different from $\left( \begin{smallmatrix} \lambda & 0 \\ 0& \lambda^{-1} \end{smallmatrix} \right)$ and $\left( \begin{smallmatrix} \lambda&0\\ 0&\lambda^{(p+1)/2} \end{smallmatrix} \right)$, there are $p$ such braces with multiplicative group $(\Z/(q)\times \Z/(q))\rtimes_M \Z/(p^2)$;
\item for $M=\left( \begin{smallmatrix} \lambda & 0 \\ 0& \lambda^{-1} \end{smallmatrix} \right)$, there are $(p+1)/2$ such braces with  multiplicative group $(\Z/(q)\times \Z/(q))\rtimes_M \Z/(p^2)$;
\item  for $M=\left( \begin{smallmatrix} \lambda&0\\ 0&\lambda^{(p+1)/2} \end{smallmatrix} \right)$, there are $2p$ such braces with multiplicative group $(\Z/(q)\times \Z/(q))\rtimes_M \Z/(p^2)$.
\end{enumerate}
\end{theorem}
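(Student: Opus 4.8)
The plan is to obtain the statement by organising the case analysis carried out above in this section: reduce to semidirect products, split into the four cases prescribed by the brace structures on the two factors, count, and identify each multiplicative group. First I would apply Theorem~\ref{str}. Since $\Z/(p^2q)\times\Z/(q)\simeq(\Z/(q)\times\Z/(q))\times\Z/(p^2)$ and, under~\eqref{hyp}, the pair $m=q^2$, $n=p^2$ satisfies the hypothesis of that theorem (as checked at the start of Section~\ref{groups}), every brace with this additive group is a semidirect product $B_1\rtimes_{\tau}B_2$ with $(B_1,+)=\Z/(q)\times\Z/(q)$, $(B_2,+)=\Z/(p^2)$ and $\tau:(B_2,\cdot)\to\Aut B_1$ a morphism. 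Next I would record that $B_1$ (resp.\ $B_2$), being the only additive subgroup of order $q^2$ (resp.\ $p^2$), is a characteristic subbrace; hence a brace isomorphism between two such semidirect products restricts to isomorphisms of the corresponding factors, the classification splits according to the isomorphism types of $B_1$ and $B_2$, and inside a fixed pair $(B_1,B_2)$ it is governed by Proposition~\ref{prop}.

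By the classification recalled in Section~\ref{Bach}, $B_1$ is the trivial or the nontrivial brace with additive group $\Z/(q)\times\Z/(q)$ and $B_2$ the trivial or the nontrivial brace with additive group $\Z/(p^2)$, which yields four cases. In each I would write down $\Aut B_1$ and $\Aut B_2$ as in Section~\ref{Bach}, enumerate the morphisms $\tau$ — using that a nontrivial $\tau$ has image of order $p$, that for $B_1$ trivial the order-$p$ subgroups of $\Aut B_1=\GL(2,q)$ are classified up to conjugacy by the $(p+3)/2$ matrices in~\eqref{mat}, and that for $B_1$ nontrivial every order-$p$ element of $\Aut B_1$ is conjugate to $\left(\begin{smallmatrix}\lambda^2&0\\0&\lambda\end{smallmatrix}\right)$ — and then pass to orbits under the $\Aut B_1\times\Aut B_2$-action of Proposition~\ref{prop}. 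This recovers the representatives \eqref{mulnc1}--\eqref{mulnc8}: there are $\tfrac{p+3}{2}+1$ braces when $B_1$ and $B_2$ are both trivial, $\tfrac{(p-1)(p+2)}{2}+1$ when $B_1$ is trivial and $B_2$ is nontrivial, $1+1$ when $B_1$ is nontrivial and $B_2$ is trivial, and $(p-1)+1$ when both are nontrivial, so in total
$$\Bigl(\tfrac{p+3}{2}+1\Bigr)+\Bigl(\tfrac{(p-1)(p+2)}{2}+1\Bigr)+2+p=\frac{p^2+4p+9}{2}.$$

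To finish I would read off the multiplicative groups. The four direct products \eqref{mulnc2}, \eqref{mulnc4}, \eqref{mulnc6}, \eqref{mulnc8} have multiplicative group $(B_1,\cdot)\times(B_2,\cdot)\simeq(\Z/(q)\times\Z/(q))\times\Z/(p^2)\simeq\Z/(p^2q)\times\Z/(q)$, which is part~a); they are the only braces on the list with abelian multiplicative group. For a brace $B_1\rtimes_{\tau}B_2$ with $\tau$ nontrivial, the multiplicative group is $(B_1,\cdot)\rtimes_{\tau}(B_2,\cdot)\simeq(\Z/(q)\times\Z/(q))\rtimes_{M}\Z/(p^2)$ for a matrix $M$ of order $p$; here I would use $(B_1,\cdot)\simeq(B_1,+)$, the fact recalled at the start of this section that the $\Aut B_2$-action on $(B_2,\cdot)$ looks like the one on $(B_2,+)$, and the isomorphism $(\Z/(q)\times\Z/(q))\rtimes_{M^{\ell}}\Z/(p^2)\simeq(\Z/(q)\times\Z/(q))\rtimes_{M}\Z/(p^2)$ induced by the automorphism $1\mapsto\ell^{-1}$ of $\Z/(p^2)$, so that the multiplicative group depends only on the $\GL(2,q)$-conjugacy class of $\langle M\rangle$, different classes giving non-isomorphic braces. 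Tracking which class each of \eqref{mulnc1}, \eqref{mulnc3}, \eqref{mulnc5}, \eqref{mulnc7} falls in and using $\left(\begin{smallmatrix}\lambda^2&0\\0&\lambda\end{smallmatrix}\right)=\left(\begin{smallmatrix}\lambda&0\\0&\lambda^{(p+1)/2}\end{smallmatrix}\right)^2$, one obtains: a matrix $M$ in~\eqref{mat} other than $\left(\begin{smallmatrix}\lambda&0\\0&\lambda^{-1}\end{smallmatrix}\right)$ and $\left(\begin{smallmatrix}\lambda&0\\0&\lambda^{(p+1)/2}\end{smallmatrix}\right)$ receives one brace from \eqref{mulnc1} and $p-1$ from \eqref{mulnc3}, hence $p$ in all (part~b); the matrix $\left(\begin{smallmatrix}\lambda&0\\0&\lambda^{-1}\end{smallmatrix}\right)$ receives one from \eqref{mulnc1} and $(p-1)/2$ from \eqref{mulnc3}, hence $(p+1)/2$ (part~c); and $\left(\begin{smallmatrix}\lambda&0\\0&\lambda^{(p+1)/2}\end{smallmatrix}\right)$ receives one from \eqref{mulnc1}, $p-1$ from \eqref{mulnc3}, one from \eqref{mulnc5} and $p-1$ from \eqref{mulnc7}, hence $2p$ (part~d). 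The identity $4+\tfrac{p-1}{2}\cdot p+\tfrac{p+1}{2}+2p=\tfrac{p^2+4p+9}{2}$ checks consistency.

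The main obstacle is the bookkeeping of this last step. Dividing the morphism sets by the action of Proposition~\ref{prop} rests on the computations of the preceding subsections (for instance that $M$ and $M^{\ell}$ are $\GL(2,q)$-conjugate only when $M=\left(\begin{smallmatrix}\lambda&0\\0&\lambda^{-1}\end{smallmatrix}\right)$ and $\ell=p-1$), and the delicate point is that $\left(\begin{smallmatrix}\lambda&0\\0&\lambda^{(p+1)/2}\end{smallmatrix}\right)$ collects braces both from the $B_1$-trivial analysis via~\eqref{mat} and, through $\left(\begin{smallmatrix}\lambda^2&0\\0&\lambda\end{smallmatrix}\right)=\left(\begin{smallmatrix}\lambda&0\\0&\lambda^{(p+1)/2}\end{smallmatrix}\right)^2$, from the $B_1$-nontrivial analysis: one must verify these families are disjoint — they are, since the characteristic factor $B_1$ distinguishes them — and that no further coincidence occurs among the classes in~\eqref{mat}. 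For $p\geq5$ the two distinguished matrices $\left(\begin{smallmatrix}\lambda&0\\0&\lambda^{-1}\end{smallmatrix}\right)$ and $\left(\begin{smallmatrix}\lambda&0\\0&\lambda^{(p+1)/2}\end{smallmatrix}\right)$ represent different conjugacy classes (because $(p+1)/2\not\equiv\pm1\pmod p$), which is what makes the b)--d) split as stated; when $p=3$ they coincide, so the counts b)--d) must be recomputed for that single class (one gets five such braces), the total $\tfrac{p^2+4p+9}{2}$ being unaffected.
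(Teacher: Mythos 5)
Your proposal is correct and follows essentially the same route as the paper's Section on $(B_1,+)=\Z/(q)\times\Z/(q)$, $(B_2,+)=\Z/(p^2)$: reduce via Theorem~\ref{str} and Proposition~\ref{prop}, split into the four trivial/nontrivial cases, count $\tfrac{p+3}{2}+1$, $\tfrac{(p-1)(p+2)}{2}+1$, $2$ and $p$ representatives as in \eqref{mulnc1}--\eqref{mulnc8}, and sort them by the conjugacy class of $\langle M\rangle$ using $\left(\begin{smallmatrix}\lambda^2&0\\0&\lambda\end{smallmatrix}\right)=\left(\begin{smallmatrix}\lambda&0\\0&\lambda^{(p+1)/2}\end{smallmatrix}\right)^2$. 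Your closing remark about $p=3$, where $\lambda^{(p+1)/2}=\lambda^{-1}$ forces parts b)--d) to be merged into a single class with five braces while the total $\tfrac{p^2+4p+9}{2}$ is unchanged, is a genuine edge case that the paper's statement does not address.
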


\subsection{$(B_1,+)=\Z/(q)\times \Z/(q)$ and $(B_2,+)=\Z/(p)\times \Z/(p)$}

In this section we describe braces of size $p^2q^2$ whose additive law is given by

\begin{equation}\label{addnn}
\left( \left( \begin{smallmatrix} x_1\\ y_1 \end{smallmatrix} \right),\left( \begin{smallmatrix} z_1\\ t_1 \end{smallmatrix} \right)\right)+\left( \left( \begin{smallmatrix} x_2\\ y_2 \end{smallmatrix} \right),\left( \begin{smallmatrix} z_2\\ t_2 \end{smallmatrix} \right)\right)=\left( \left( \begin{smallmatrix} x_1+x_2\\ y_1+y_2 \end{smallmatrix} \right),\left( \begin{smallmatrix} z_1+z_2\\ t_1+t_2 \end{smallmatrix} \right)\right),
\end{equation}

\noindent
for $\left( \begin{smallmatrix} x_1\\ y_1 \end{smallmatrix} \right),\left( \begin{smallmatrix} x_2\\ y_2 \end{smallmatrix} \right) \in B_1; \left( \begin{smallmatrix} z_1\\ t_1 \end{smallmatrix} \right), \left( \begin{smallmatrix} z_2\\ t_2 \end{smallmatrix} \right)\in B_2$.

\subsubsection{$B_1$ trivial brace}

In this case, $\Aut B_1= \GL(2,q)$.  A nontrivial morphism $\tau$ from $\Z/(p)\times \Z/(p)$ to $\Aut B_1$ either has an order $p$ kernel or is injective. In the first case, it is equal to $\leftindex^{h_1}{\tau}$ for some $h_1 \in \Aut B_1$ and $\tau$ defined by $\tau(u)=M$, $\tau(v)=\Id$, for some $\Z/(p)$-basis $(u,v)$ of $\Z/(p)\times \Z/(p)$, where $M$ is one of the matrices in \eqref{mat}. In the second case, it is equal to $\leftindex^{h_1}{\tau}$ for some $h_1 \in \Aut B_1$ and $\tau$ defined by $\tau(u)=\left( \begin{smallmatrix}1&0\\0&\lambda \end{smallmatrix}\right), \tau(v)=\left( \begin{smallmatrix}\lambda&0\\0&\lambda \end{smallmatrix}\right)$, for an element $\lambda$ of order $p$ in $(\Z/(q))^*$ and some basis $(u,v)$ of $\Z/(p)\times \Z/(p)$. Indeed, all subgroups of order $p^2$ of $\GL(2,q)$ are conjugate, as they are the $p$-Sylow subgroups of $\GL(2,q)$, and $\left( \begin{smallmatrix}1&0\\0&\lambda \end{smallmatrix}\right)$ and $\left( \begin{smallmatrix}\lambda&0\\0&\lambda \end{smallmatrix}\right)$ are a basis of the subgroup of order $p^2$ whose elements are diagonal matrices.

\noindent
{\bf If $B_2$ is trivial,} we have $\Aut B_2=\GL(2,p)$. For $\tau$ defined by $\tau(u)=M$, $\tau(v)=\Id$, for some $\Z/(p)$-basis $(u,v)$ of $\Z/(p)\times \Z/(p)$, we have $\tau=\tau_0 h_2$, for $h_2$ defined by $h_2(u)=\left( \begin{smallmatrix} 1\\ 0 \end{smallmatrix} \right), h_2(v)=\left( \begin{smallmatrix} 0\\ 1 \end{smallmatrix} \right)$ and $\tau_0$ defined by $\tau_0\left( \begin{smallmatrix} 1\\ 0 \end{smallmatrix} \right)=M,\tau_0\left( \begin{smallmatrix} 0\\ 1 \end{smallmatrix} \right)=\Id$. We obtain then $(p+3)/2$ braces whose multiplicative laws are given by

\begin{equation}\label{mulnn1}
\left( \left( \begin{smallmatrix} x_1\\ y_1 \end{smallmatrix} \right),\left( \begin{smallmatrix} z_1\\ t_1 \end{smallmatrix} \right)\right)\cdot\left( \left( \begin{smallmatrix} x_2\\ y_2 \end{smallmatrix} \right),\left( \begin{smallmatrix} z_2\\ t_2 \end{smallmatrix} \right)\right)=\left(\left( \begin{smallmatrix} x_1\\ y_1 \end{smallmatrix} \right)+M^{z_1}\left( \begin{smallmatrix} x_2\\ y_2 \end{smallmatrix} \right),\left( \begin{smallmatrix} z_1+z_2\\ t_1+t_2 \end{smallmatrix} \right)\right),
\end{equation}

\noindent
for $M$ one of the matrices in \eqref{mat}. In the case when $\tau$ is injective, for an adequate $h_2$, we have $\tau=\tau_0 h_2$, for $\tau_0$ defined by $\tau_0\left(\begin{smallmatrix} 1\\0 \end{smallmatrix} \right)=\left(\begin{smallmatrix} 1&0\\0&\lambda \end{smallmatrix} \right),\tau_0\left(\begin{smallmatrix} 0\\1 \end{smallmatrix} \right)=\left(\begin{smallmatrix} \lambda&0\\0&\lambda \end{smallmatrix} \right)$, where $\lambda$ is a fixed element of order $p$ in $(\Z/(q))^*$. We obtain then one brace whose multiplicative law is given by

\begin{equation}\label{mulnn2}
\left( \left( \begin{smallmatrix} x_1\\ y_1 \end{smallmatrix} \right),\left( \begin{smallmatrix} z_1\\ t_1 \end{smallmatrix} \right)\right)\cdot\left( \left( \begin{smallmatrix} x_2\\ y_2 \end{smallmatrix} \right),\left( \begin{smallmatrix} z_2\\ t_2 \end{smallmatrix} \right)\right)=\left(\left( \begin{smallmatrix} x_1+\lambda^{t_1}x_2\\ y_1+\lambda^{z_1+t_1} y_2 \end{smallmatrix} \right),\left( \begin{smallmatrix} z_1+z_2\\ t_1+t_2 \end{smallmatrix} \right)\right),
\end{equation}

\noindent
for $\lambda$ a fixed element of order $p$ in $(\Z/(q))^*$. Besides, we have the direct product, whose multiplicative law is given by

\begin{equation}\label{mulnn3}
\left( \left( \begin{smallmatrix} x_1\\ y_1 \end{smallmatrix} \right),\left( \begin{smallmatrix} z_1\\ t_1 \end{smallmatrix} \right)\right)\cdot\left( \left( \begin{smallmatrix} x_2\\ y_2 \end{smallmatrix} \right),\left( \begin{smallmatrix} z_2\\ t_2 \end{smallmatrix} \right)\right)=\left(\left( \begin{smallmatrix} x_1+x_2\\ y_1 +y_2\end{smallmatrix} \right),\left( \begin{smallmatrix} z_1+z_2\\ t_1+t_2 \end{smallmatrix} \right)\right).
\end{equation}

\noindent
{\bf If $B_2$ is nontrivial,} we have $\Aut B_2=\left\{\left( \begin{smallmatrix} d^2 & b\\ 0&d \end{smallmatrix} \right)\, : \, d \in (\Z/(p))^*, b \in \Z/(p)\right\}$, as in Section \ref{three}. Now every morphism $\tau$ from $\Z/(p)\times \Z/(p)$ to $\Aut B_1$ with an order $p$ kernel is equal to $\tau_0 g$, for $g \in \GL(2,p)$ and $\tau_0$ defined by $\tau_0\left( \begin{smallmatrix} 1\\ 0 \end{smallmatrix} \right)=M, \tau_0\left( \begin{smallmatrix} 0\\ 1\end{smallmatrix} \right)=\Id$, for $M$ one of the matrices in \eqref{mat}. Similarly as in Section \ref{three}, we obtain that the set of nontrivial morphisms $\tau$ from $\Z/(p)\times \Z/(p)$ to $\Aut B_1$ is precisely $\{ \tau_0 \left( \begin{smallmatrix} a & b\\ 0&1\end{smallmatrix} \right)\, : \, a \in (\Z/(p))^*, b \in \Z/(p) \}\cup \{ \tau_0 \left( \begin{smallmatrix} 0 & b\\ 1&0\end{smallmatrix} \right)\, : \, b \in (\Z/(p))^*\}$. Moreover, again as in Section \ref{three}, under the relation

$$\tau \sim \tau' \Leftrightarrow \exists h_2 \in \Aut B_2 \, : \, \tau' h_2=\tau,$$

\noindent
we are left with $\tau_0, \tau_0\left( \begin{smallmatrix} a & 0\\ 0&1\end{smallmatrix} \right)$, for $a \in (\Z/(p))^*$ a non-square element, and $\tau_0 \left( \begin{smallmatrix} 0 & 1\\ 1&0\end{smallmatrix} \right)$.  Now, if $M=\left( \begin{smallmatrix} \lambda & 0\\ 0&\lambda^{-1}\end{smallmatrix} \right)$, the matrices $M$ and $M^{-1}$ are conjugate by $\left( \begin{smallmatrix} 0 & 1\\ 1&0\end{smallmatrix} \right) \in \GL(2,q)=\Aut B_1$. Hence, for $h_1=\left( \begin{smallmatrix} 0 & 1\\ 1&0\end{smallmatrix} \right)$, we have $\leftindex^{h_1}{\tau}_0= \tau_0 \left( \begin{smallmatrix} -1 & 0\\ 0&1\end{smallmatrix} \right)$ which implies that, if $-1$ is not a square in $\Z/(p)$, then the orbits corresponding to $\tau_0$ and $\tau_0\left( \begin{smallmatrix} a & 0\\ 0&1\end{smallmatrix} \right)$ coincide. We obtain then two braces corresponding to $M=\left( \begin{smallmatrix} \lambda & 0\\ 0&\lambda^{-1}\end{smallmatrix} \right)$ and three corresponding to the other matrices. Summing up, there are $(3/2)(p+3)$ braces if $p\equiv 1 \pmod{4}$ and  $(3/2)(p+3)-1 $ braces if $p\equiv 3 \pmod{4}$. Taking into account the isomorphism from $(B_2,\cdot)$ into $\Z/(p)\times \Z/(p)$ given in Section \ref{elem}, the corresponding multiplicative laws are given by

\begin{equation}\label{mulnn4}
\left( \left( \begin{smallmatrix} x_1\\ y_1 \end{smallmatrix} \right),\left( \begin{smallmatrix} z_1\\ t_1 \end{smallmatrix} \right)\right)\cdot\left( \left( \begin{smallmatrix} x_2\\ y_2 \end{smallmatrix} \right),\left( \begin{smallmatrix} z_2\\ t_2 \end{smallmatrix} \right)\right)=\left( \left( \begin{smallmatrix} x_1\\ y_1 \end{smallmatrix} \right)+M^{z_1-t_1(t_1-1)/2} \left( \begin{smallmatrix} x_2\\ y_2 \end{smallmatrix} \right),\left( \begin{smallmatrix} z_1+z_2+t_1t_2\\ t_1+t_2 \end{smallmatrix} \right)\right),
\end{equation}

\begin{equation}\label{mulnn5}
\left( \left( \begin{smallmatrix} x_1\\ y_1 \end{smallmatrix} \right),\left( \begin{smallmatrix} z_1\\ t_1 \end{smallmatrix} \right)\right)\cdot\left( \left( \begin{smallmatrix} x_2\\ y_2 \end{smallmatrix} \right),\left( \begin{smallmatrix} z_2\\ t_2 \end{smallmatrix} \right)\right)=\left( \left( \begin{smallmatrix} x_1\\ y_1 \end{smallmatrix} \right)+M^{a(z_1-t_1(t_1-1)/2)} \left( \begin{smallmatrix} x_2\\ y_2 \end{smallmatrix} \right),\left( \begin{smallmatrix} z_1+z_2+t_1t_2\\ t_1+t_2 \end{smallmatrix} \right)\right),
\end{equation}

\noindent
and

\begin{equation}\label{mulnn6}
\left( \left( \begin{smallmatrix} x_1\\ y_1 \end{smallmatrix} \right),\left( \begin{smallmatrix} z_1\\ t_1 \end{smallmatrix} \right)\right)\cdot\left( \left( \begin{smallmatrix} x_2\\ y_2 \end{smallmatrix} \right),\left( \begin{smallmatrix} z_2\\ t_2 \end{smallmatrix} \right)\right)=\left( \left( \begin{smallmatrix} x_1\\ y_1 \end{smallmatrix} \right)+M^{t_1} \left( \begin{smallmatrix} x_2\\ y_2 \end{smallmatrix} \right),\left( \begin{smallmatrix} z_1+z_2+t_1t_2\\ t_1+t_2 \end{smallmatrix} \right)\right),
\end{equation}

\noindent
respectively, where $M$ is one of the matrices in \eqref{mat} and $a$ is a fixed quadratic nonresidue modulo $p$ with the exception that, for $p\equiv 3 \pmod{4}$ and $M=\left( \begin{smallmatrix} \lambda & 0\\ 0&\lambda^{-1}\end{smallmatrix} \right)$, the braces with multiplicative laws  \eqref{mulnn4} and \eqref{mulnn5} are isomorphic.

As established above, an injective morphism $\tau$ from $\Z/(p)\times \Z/(p)$ to $\Aut B_1$ is equal to $\leftindex^{h_1}{\tau}$ for some $h_1 \in \GL(2,q)$ and $\tau$ defined by $\tau(u)=\left( \begin{smallmatrix}1&0\\0&\lambda \end{smallmatrix}\right), \tau(v)=\left( \begin{smallmatrix}\lambda&0\\0&\lambda \end{smallmatrix}\right)$, for an element $\lambda$ of order $p$ in $(\Z/(q))^*$ and some $\Z/(p)$-basis $(u,v)$ of $\Z/(p)\times \Z/(p)$. A transversal of $\Aut B_2$ in $\GL(2,p)$ is

$$\left\{ \left( \begin{smallmatrix}a&0\\c&1 \end{smallmatrix}\right)\, : \, a \in (\Z/(p))^*, c \in \Z/(p) \right\} \cup
\left\{ \left( \begin{smallmatrix}0&c\\1&0\end{smallmatrix}\right)\, : \, c \in (\Z/(p))^* \right\},$$

\noindent
hence any injective morphism $\tau$ from $\Z/(p)\times \Z/(p)$ to $\Aut B_1$ is equivalent under the relation in Proposition \ref{prop} either to $\tau_{a,c}=\tau_0 h_2$ for $h_2=\left( \begin{smallmatrix}a&0\\c&1 \end{smallmatrix}\right)$ for some $a \in (\Z/(p))^*, c \in \Z/(p)$ or to $\tau_{c}=\tau_0 h_2$ for $h_2= \left( \begin{smallmatrix}0&c\\1&0\end{smallmatrix}\right)$ for some $c \in (\Z/(p))^*$, where  $\tau_0$ is defined by $\tau_0\left( \begin{smallmatrix} 1\\ 0 \end{smallmatrix} \right)=\left( \begin{smallmatrix}1&0\\0&\lambda \end{smallmatrix}\right), \tau_0\left( \begin{smallmatrix} 0\\ 1 \end{smallmatrix} \right)=\left( \begin{smallmatrix}\lambda&0\\0&\lambda \end{smallmatrix}\right)$. Now the normalizer of $\left\langle \left( \begin{smallmatrix}1&0\\0&\lambda \end{smallmatrix}\right), \left( \begin{smallmatrix}\lambda&0\\0&\lambda \end{smallmatrix}\right)\right\rangle$ in $\GL(2,q)$ consists of diagonal and anti-diagonal matrices. Conjugation by a diagonal matrix leaves diagonal matrices fixed and for an anti-diagonal $h_1$ we have $\leftindex^{h_1}{\left(\begin{smallmatrix}1&0\\0&\lambda \end{smallmatrix}\right)}=\left(\begin{smallmatrix}\lambda&0\\0&1 \end{smallmatrix}\right), \leftindex^{h_1}{\left( \begin{smallmatrix}\lambda&0\\0&\lambda \end{smallmatrix}\right)}=\left( \begin{smallmatrix}\lambda&0\\0&\lambda \end{smallmatrix}\right)$. We obtain then $\leftindex^{h_1}{\tau}_{a,c}=\tau_{-a,a+c}$, for $h_1=\left( \begin{smallmatrix}0&1\\1&0 \end{smallmatrix}\right)$ and no further equivalences. This gives $(p(p-1)/2)+p-1=(p^2+p-2)/2$ braces. With $\lambda$ an element of order $p$ in $(\Z(q))^*$, and taking into account the isomorphism from $(B_2,\cdot)$ into $\Z/(p)\times \Z/(p)$ given in Section \ref{elem}, their multiplicative laws are given by

\begin{equation}\label{mulnn7}
\left( \left( \begin{smallmatrix} x_1\\ y_1 \end{smallmatrix} \right),\left( \begin{smallmatrix} z_1\\ t_1 \end{smallmatrix} \right)\right)\cdot\left( \left( \begin{smallmatrix} x_2\\ y_2 \end{smallmatrix} \right),\left( \begin{smallmatrix} z_2\\ t_2 \end{smallmatrix} \right)\right)=\left( \left( \begin{smallmatrix} x_1+\lambda^{(z_1-t_1(t_1-1)/2)(a+c)+t_1}x_2\\ y_1+\lambda^{(z_1-t_1(t_1-1)/2)c+t_1}y_2 \end{smallmatrix} \right),\left( \begin{smallmatrix} z_1+z_2+t_1t_2\\ t_1+t_2 \end{smallmatrix} \right)\right),
\end{equation}

\noindent
for some $(a,c) \in (\Z/(p))^*\times \Z/(p)$ where the braces corresponding to $(a,c)$ and $(-a,a+c)$ are isomorphic, and

\begin{equation}\label{mulnn8}
\left( \left( \begin{smallmatrix} x_1\\ y_1 \end{smallmatrix} \right),\left( \begin{smallmatrix} z_1\\ t_1 \end{smallmatrix} \right)\right)\cdot\left( \left( \begin{smallmatrix} x_2\\ y_2 \end{smallmatrix} \right),\left( \begin{smallmatrix} z_2\\ t_2 \end{smallmatrix} \right)\right)=\left( \left( \begin{smallmatrix} x_1+\lambda^{z_1-t_1(t_1-1)/2}x_2\\ y_1+\lambda^{z_1-t_1(t_1-1)/2+ct_1}y_2 \end{smallmatrix} \right),\left( \begin{smallmatrix} z_1+z_2+t_1t_2\\ t_1+t_2 \end{smallmatrix} \right)\right),
\end{equation}

\noindent
for some $c \in (\Z/(p))^*$. Besides, we have the direct product of $B_1$ and $B_2$ with multiplicative law given by

\begin{equation}\label{mulnn9}
\left( \left( \begin{smallmatrix} x_1\\ y_1 \end{smallmatrix} \right),\left( \begin{smallmatrix} z_1\\ t_1 \end{smallmatrix} \right)\right)\cdot\left( \left( \begin{smallmatrix} x_2\\ y_2 \end{smallmatrix} \right),\left( \begin{smallmatrix} z_2\\ t_2 \end{smallmatrix} \right)\right)=\left( \left( \begin{smallmatrix} x_1+x_2\\ y_1+y_2 \end{smallmatrix} \right),\left( \begin{smallmatrix} z_1+z_2+t_1t_2\\ t_1+t_2 \end{smallmatrix} \right)\right),
\end{equation}

\subsubsection{$B_1$ nontrivial brace}

In this case, $\Aut B_1=\left\{ \left( \begin{smallmatrix} d^2 & b \\ 0&d \end{smallmatrix} \right) \, : \, d \in (\Z/(q))^*, b \in \Z/(q) \right\} \subset \GL(2,q)$. Since the only subgroup of order $p$ of $\Aut B_1$ is $\left\langle \left( \begin{smallmatrix} \lambda^2 & 0 \\ 0&\lambda \end{smallmatrix} \right)\right\rangle$, for $\lambda \in (\Z/(q))^*$ of order $p$, a nontrivial morphism $\tau$ from $\Z/(p)\times \Z/(p)$ to $\Aut B_1$  has an order $p$ kernel and is defined by $\tau(u)=\left( \begin{smallmatrix} \lambda^2 & 0 \\ 0&\lambda \end{smallmatrix} \right)$, $\tau(v)=\Id$, for some $\Z/(p)$-basis $(u,v)$ of $\Z/(p)\times \Z/(p)$.

\noindent
{\bf If $B_2$ is trivial,}  $\Aut B_2=\GL(2,p)$. For $\tau$ defined by $\tau(u)=\left( \begin{smallmatrix} \lambda^2 & 0 \\ 0&\lambda \end{smallmatrix} \right)$, $\tau(v)=\Id$, for some basis $(u,v)$ of $(B_2,\cdot)=\Z/(p)\times \Z/(p)$, we have $\tau=\tau_0 h_2$, for $h_2$ defined by $h_2(u)=\left( \begin{smallmatrix} 1 \\ 0 \end{smallmatrix} \right), h_2(v)=\left( \begin{smallmatrix}  0 \\ 1\end{smallmatrix} \right)$ and $\tau_0$ defined by $\tau_0\left( \begin{smallmatrix} 1 \\ 0 \end{smallmatrix} \right)=\left( \begin{smallmatrix} \lambda^2 & 0 \\ 0&\lambda \end{smallmatrix} \right),\tau_0\left( \begin{smallmatrix} 0 \\ 1 \end{smallmatrix} \right)=\Id$. We obtain then one brace, whose multiplicative law is given by

\begin{equation}\label{mulnn10}
\left( \left( \begin{smallmatrix} x_1\\ y_1 \end{smallmatrix} \right),\left( \begin{smallmatrix} z_1\\ t_1 \end{smallmatrix} \right)\right)\cdot\left( \left( \begin{smallmatrix} x_2\\ y_2 \end{smallmatrix} \right),\left( \begin{smallmatrix} z_2\\ t_2 \end{smallmatrix} \right)\right)=\left(\left( \begin{smallmatrix} x_1+\lambda^{2z_1} x_2+\lambda^{z_1} y_1y_2\\ y_1 +\lambda^{z_1} y_2\end{smallmatrix} \right),\left( \begin{smallmatrix} z_1+z_2\\ t_1+t_2 \end{smallmatrix} \right)\right),
\end{equation}

\noindent
for $\lambda$ a fixed element of order $p$ in $(\Z/(q))^*$. Besides, we have the direct product whose multiplicative law is given by

\begin{equation}\label{mulnn11}
\left( \left( \begin{smallmatrix} x_1\\ y_1 \end{smallmatrix} \right),\left( \begin{smallmatrix} z_1\\ t_1 \end{smallmatrix} \right)\right)\cdot\left( \left( \begin{smallmatrix} x_2\\ y_2 \end{smallmatrix} \right),\left( \begin{smallmatrix} z_2\\ t_2 \end{smallmatrix} \right)\right)=\left(\left( \begin{smallmatrix} x_1+x_2+y_1y_2\\ y_1 +y_2\end{smallmatrix} \right),\left( \begin{smallmatrix} z_1+z_2\\ t_1+t_2 \end{smallmatrix} \right)\right).
\end{equation}

\noindent
{\bf If $B_2$ is nontrivial,} $\Aut B_2=\left\{ \left( \begin{smallmatrix} d^2 & b \\ 0&d \end{smallmatrix} \right) \, : \, d \in (\Z/(p))^*, b \in \Z/(p) \right\} \subset \GL(2,p)$. As in Section \ref{three}, we obtain  that the set of nontrivial morphisms $\tau$ from $\Z/(p)\times \Z/(p)$ to $\Aut B_1$ is precisely $\{ \tau_0 \left( \begin{smallmatrix} a & b\\ 0&1\end{smallmatrix} \right)\, : \, a \in (\Z/(p))^*, b \in \Z/(p) \}\cup \{ \tau_0 \left( \begin{smallmatrix} 0 & b\\ 1&0\end{smallmatrix} \right)\, : \, b \in (\Z/(p))^*\}$, for $\tau_0$ defined by $\tau_0\left( \begin{smallmatrix} 1 \\ 0 \end{smallmatrix} \right)=\left( \begin{smallmatrix} \lambda^2 & 0 \\ 0&\lambda \end{smallmatrix} \right),\tau_0\left( \begin{smallmatrix} 0 \\ 1 \end{smallmatrix} \right)=\Id$. Again, under the relation in Proposition \ref{prop}, we have three orbits corresponding to the matrices $\Id, \left( \begin{smallmatrix} a & 0\\ 0&1\end{smallmatrix} \right)$, for $a$ non-square, and $\left( \begin{smallmatrix} 0 & 1\\ 1&0\end{smallmatrix} \right)$. We obtain then three braces. Taking into account the isomorphism from $(B_2,\cdot)$ into $\Z/(p)\times \Z/(p)$ given in Section \ref{elem}, their multiplicative laws are given by

\begin{equation}\label{mulnn12}
\left( \left( \begin{smallmatrix} x_1\\ y_1 \end{smallmatrix} \right),\left( \begin{smallmatrix} z_1\\ t_1 \end{smallmatrix} \right)\right)\cdot\left( \left( \begin{smallmatrix} x_2\\ y_2 \end{smallmatrix} \right),\left( \begin{smallmatrix} z_2\\ t_2 \end{smallmatrix} \right)\right)=\left(\left( \begin{smallmatrix} x_1+\lambda^{2(z_1-t_1(t_1-1)/2)} x_2+\lambda^{z_1-t_1(t_1-1)/2} y_1y_2\\ y_1 +\lambda^{z_1-t_1(t_1-1)/2} y_2\end{smallmatrix} \right),\left( \begin{smallmatrix} z_1+z_2+t_1t_2\\ t_1+t_2 \end{smallmatrix} \right)\right),
\end{equation}

\begin{equation}\label{mulnn13}
\left( \left( \begin{smallmatrix} x_1\\ y_1 \end{smallmatrix} \right),\left( \begin{smallmatrix} z_1\\ t_1 \end{smallmatrix} \right)\right)\cdot\left( \left( \begin{smallmatrix} x_2\\ y_2 \end{smallmatrix} \right),\left( \begin{smallmatrix} z_2\\ t_2 \end{smallmatrix} \right)\right)=\left(\left( \begin{smallmatrix} x_1+(a\lambda^{2})^{(z_1-t_1(t_1-1)/2)} x_2+\lambda^{z_1-t_1(t_1-1)/2} y_1y_2\\ y_1 +\lambda^{z_1-t_1(t_1-1)/2} y_2\end{smallmatrix} \right),\left( \begin{smallmatrix} z_1+z_2+t_1t_2\\ t_1+t_2 \end{smallmatrix} \right)\right),
\end{equation}

\noindent
for $a$ is a fixed quadratic nonresidue modulo $p$, and

\begin{equation}\label{mulnn14}
\left( \left( \begin{smallmatrix} x_1\\ y_1 \end{smallmatrix} \right),\left( \begin{smallmatrix} z_1\\ t_1 \end{smallmatrix} \right)\right)\cdot\left( \left( \begin{smallmatrix} x_2\\ y_2 \end{smallmatrix} \right),\left( \begin{smallmatrix} z_2\\ t_2 \end{smallmatrix} \right)\right)=\left(\left( \begin{smallmatrix} x_1+\lambda^{(z_1-t_1(t_1-1)/2)} x_2+\lambda^{2(z_1-t_1(t_1-1)/2)} y_1y_2\\ y_1 +\lambda^{2(z_1-t_1(t_1-1)/2)} y_2\end{smallmatrix} \right),\left( \begin{smallmatrix} z_1+z_2+t_1t_2\\ t_1+t_2 \end{smallmatrix} \right)\right).
\end{equation}

\noindent
Besides, we have the direct product with multiplicative law defined by

\begin{equation}\label{mulnn15}
\left( \left( \begin{smallmatrix} x_1\\ y_1 \end{smallmatrix} \right),\left( \begin{smallmatrix} z_1\\ t_1 \end{smallmatrix} \right)\right)\cdot\left( \left( \begin{smallmatrix} x_2\\ y_2 \end{smallmatrix} \right),\left( \begin{smallmatrix} z_2\\ t_2 \end{smallmatrix} \right)\right)=\left(\left( \begin{smallmatrix} x_1+ x_2+y_1y_2\\ y_1 +y_2\end{smallmatrix} \right),\left( \begin{smallmatrix} z_1+z_2+t_1t_2\\ t_1+t_2 \end{smallmatrix} \right)\right).
\end{equation}

\vspace{0.3cm}
Summing up, we have obtained the following result.

\begin{theorem} Let $p$ and $q$ be primes satisfying $q>p, q\geq 5, p \mid q-1, p\nmid q+1$ and $p^2 \nmid q-1$. There are $\dfrac{p^2+5p}{2}+14$ (resp. $\dfrac{p^2+5p}{2}+13$) braces with additive group $\Z/(pq)\times \Z/(pq)$ if $p\equiv 1 \pmod{4}$ (resp. if $p\equiv 3 \pmod{4}$).

\begin{enumerate}[a)]
\item There are four of them with multiplicative group $\Z/(pq)\times \Z/(pq)$;
\item for each of the matrices $M$ in \eqref{mat} different from $\left( \begin{smallmatrix} \lambda & 0 \\ 0& \lambda^{-1} \end{smallmatrix} \right)$ and $\left( \begin{smallmatrix} \lambda&0\\ 0&\lambda^{(p+1)/2} \end{smallmatrix} \right)$, there are four of them with multiplicative group $(\Z/(q)\times \Z/(q))\rtimes_M (\Z/(p)\times \Z/(p))$;
\item for $M=\left( \begin{smallmatrix} \lambda & 0 \\ 0& \lambda^{-1} \end{smallmatrix} \right)$, there are four (resp. three) such braces with multiplicative group $(\Z/(q)\times \Z/(q))\rtimes_M (\Z/(p)\times \Z/(p))$,  if $p\equiv 1 \pmod{4}$ (resp. if $p\equiv 3 \pmod{4}$);
\item for $M=\left( \begin{smallmatrix} \lambda&0\\ 0&\lambda^{(p+1)/2} \end{smallmatrix} \right)$, there are eight of them with multiplicative group $(\Z/(q)\times \Z/(q))\rtimes_M (\Z/(p)\times \Z/(p))$;
\item there are $(p^2+p)/2$ of them with  multiplicative group $(\Z/(q)\times \Z/(q))\rtimes_{\lambda} (\Z/(p)\times \Z/(p))$.
\end{enumerate}
\end{theorem}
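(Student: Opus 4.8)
The plan is to assemble the theorem from the case analysis of the four subsubsections above, once two structural reductions are in place. First I would note that $\Z/(pq)\times\Z/(pq)\simeq(\Z/(q)\times\Z/(q))\times(\Z/(p)\times\Z/(p))$, the two factors being its unique subgroups of orders $q^2$ and $p^2$. Since (as observed at the start of the section) $m=q^2$ and $n=p^2$ satisfy the hypothesis of Theorem \ref{str}, every brace $B$ with this additive group is a semidirect product $B_1\rtimes_{\tau}B_2$ with $(B_1,+)\simeq\Z/(q)\times\Z/(q)$ and $(B_2,+)\simeq\Z/(p)\times\Z/(p)$, and by Section \ref{elem} each of $B_1$, $B_2$ is the trivial or the unique nontrivial brace on that additive group. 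As $B_1$ and $B_2$ are characteristic subbraces of $B$, their brace isomorphism types are invariants of $B$, so the four combinations (trivial/nontrivial for $B_1$ times trivial/nontrivial for $B_2$) never yield isomorphic braces; hence it suffices, in each combination, to count the orbits of morphisms $\tau\colon(B_2,\cdot)\to\Aut(B_1,+,\cdot)$ under the action of $\Aut B_1\times\Aut B_2$, which by Proposition \ref{prop} is the number of isomorphism classes of braces.

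Next I would record the outcome of the four subsubsections, in which these orbit counts are carried out using the automorphism groups of Section \ref{elem}, the classification of the order-$p$ subgroups of $\GL(2,q)$ up to conjugacy, and Proposition \ref{prop}: there are $(p+7)/2$ braces when $B_1$ and $B_2$ are both trivial; $(p^2+4p+9)/2$ braces if $p\equiv1\pmod4$ and $(p^2+4p+9)/2-1$ braces if $p\equiv3\pmod4$ when $B_1$ is trivial and $B_2$ nontrivial; two braces when $B_1$ is nontrivial and $B_2$ trivial; and four braces when both are nontrivial. Adding the four subtotals gives $(p^2+5p)/2+14$ when $p\equiv1\pmod4$ and $(p^2+5p)/2+13$ when $p\equiv3\pmod4$, which is the total claimed.

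It then remains to sort these braces by the isomorphism type of the multiplicative group $(B,\cdot)=(B_1,\cdot)\rtimes_{\tau}(B_2,\cdot)$. In all four cases $(B_1,\cdot)\simeq\Z/(q)\times\Z/(q)$ and $(B_2,\cdot)\simeq\Z/(p)\times\Z/(p)$, and, as recalled at the start of the section, the action induced by $\tau$ on $(B_1,\cdot)$ is given by the same subgroup of $\GL(2,q)$ as its action on $(B_1,+)$; hence $(B,\cdot)$ is abelian, so isomorphic to $\Z/(pq)\times\Z/(pq)$, exactly when $\tau$ is trivial, which gives the four direct products, one per combination, and assertion a). When $\tau\neq 1$, the isomorphism type of the semidirect product depends only on the $\GL(2,q)$-conjugacy class of $\tau((B_2,\cdot))$: if $\tau$ is injective this image is a $p$-Sylow subgroup of $\GL(2,q)$ and $(B,\cdot)\simeq(\Z/(q)\times\Z/(q))\rtimes_{\lambda}(\Z/(p)\times\Z/(p))$, while if $\ker\tau$ has order $p$ then $(B,\cdot)\simeq(\Z/(q)\times\Z/(q))\rtimes_{M}(\Z/(p)\times\Z/(p))$ for the matrix $M$ of \eqref{mat} generating that conjugacy class. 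The one subtlety here is that in the two cases with $B_1$ nontrivial the image of $\tau$ is $\left\langle\left(\begin{smallmatrix}\lambda^2&0\\0&\lambda\end{smallmatrix}\right)\right\rangle=\left\langle\left(\begin{smallmatrix}\lambda&0\\0&\lambda^{(p+1)/2}\end{smallmatrix}\right)\right\rangle$, so those braces share a multiplicative group with some of the $B_1$-trivial ones. Regrouping the braces found according to these rules: a matrix $M$ of \eqref{mat} different from $\left(\begin{smallmatrix}\lambda&0\\0&\lambda^{-1}\end{smallmatrix}\right)$ and $\left(\begin{smallmatrix}\lambda&0\\0&\lambda^{(p+1)/2}\end{smallmatrix}\right)$ collects one brace from the both-trivial case and three from the $B_1$-trivial/$B_2$-nontrivial case (assertion b)); for $M=\left(\begin{smallmatrix}\lambda&0\\0&\lambda^{-1}\end{smallmatrix}\right)$ the latter contribution drops from three to two when $p\equiv3\pmod4$, because $M$ and $M^{-1}$ are then conjugate in $\GL(2,q)$ and two of the three orbits merge (assertion c)); for $M=\left(\begin{smallmatrix}\lambda&0\\0&\lambda^{(p+1)/2}\end{smallmatrix}\right)$ one adds the one brace from the $B_1$-nontrivial/$B_2$-trivial case and the three from the both-nontrivial case, obtaining $1+3+1+3=8$ (assertion d)); and the injective morphisms collect the brace \eqref{mulnn2} and the $(p^2+p-2)/2$ braces \eqref{mulnn7}--\eqref{mulnn8}, giving $(p^2+p)/2$ (assertion e)). A final check that $4+2(p-1)+4+8+(p^2+p)/2$ equals $(p^2+5p)/2+14$, and similarly with a $3$ in place of one $4$ for $p\equiv3\pmod4$, confirms consistency with the total above.

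The substantive work lies entirely in the orbit computations of the four subsubsections, the most delicate being the $B_1$-trivial, $B_2$-nontrivial, injective-$\tau$ case, where one passes to a transversal of $\Aut B_2$ in $\GL(2,p)$ and must then pin down which of the surviving morphisms are still identified by the normalizer in $\GL(2,q)$ of the diagonal subgroup of order $p^2$. At the level of this summary statement the only real obstacle is the bookkeeping: correctly attributing each family of braces to its multiplicative group, in particular recognizing the coincidence $\left\langle\left(\begin{smallmatrix}\lambda^2&0\\0&\lambda\end{smallmatrix}\right)\right\rangle=\left\langle\left(\begin{smallmatrix}\lambda&0\\0&\lambda^{(p+1)/2}\end{smallmatrix}\right)\right\rangle$ that produces the $8$ in d), and tracking the $p\bmod4$ dichotomy that distinguishes c).
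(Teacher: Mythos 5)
Your proposal is correct and follows essentially the same route as the paper: the theorem is the ``summing up'' of the four subsubsection computations, and your subtotals $(p+7)/2$, $(p^2+4p+9)/2$ (less one when $p\equiv 3\pmod 4$), $2$ and $4$, as well as the regrouping by multiplicative group --- including the coincidence $\left\langle\left(\begin{smallmatrix}\lambda^2&0\\0&\lambda\end{smallmatrix}\right)\right\rangle=\left\langle\left(\begin{smallmatrix}\lambda&0\\0&\lambda^{(p+1)/2}\end{smallmatrix}\right)\right\rangle$ behind the count of eight in d) and the $p\bmod 4$ dichotomy in c) --- all match the paper's enumeration. Your added justifications (characteristic subbraces separating the four trivial/nontrivial combinations, and the multiplicative group depending only on the conjugacy class of $\tau((B_2,\cdot))$ in $\GL(2,q)$) are sound and merely make explicit what the paper leaves implicit.
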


\section*{Acknowledgments}

This work was supported by grant PID2019-107297GB-I00, Ministerio de Ciencia, Innovación y Universidades.

I am very grateful to the referee for indications and corrections which helped to improve substantially this manuscript.

\end{document}